\newenvironment{customthm}[1]
  {\innercustomthm}
  {\endinnercustomthm}
\newtheorem{theorem}{Theorem}[section]
\newtheorem{corollary}[theorem]{Corollary}
\newtheorem{proposition}[theorem]{Proposition}
\newtheorem{lemma}[theorem]{Lemma}
\theoremstyle{definition}
\DeclarePairedDelimiter\ceil{\lceil}{\rceil}
\DeclarePairedDelimiter\floor{\lfloor}{\rfloor}
\newcommand{\al}{\alpha}
\begin{document}

\title{Independence numbers of some double vertex graphs and pair graphs}

\author{Paloma Jim\'enez-Sep\'ulveda
\and Luis Manuel Rivera
}
\date{}

\maketitle

\begin{abstract}

The combinatorial properties of double vertex graphs has been widely studied since the 90's. However only very few results are know about the independence number of such graphs. In this paper we obtain the independence numbers of the double vertex graphs of fan graphs and wheel graphs. Also we obtain the independence numbers of the pair graphs, that is a generalization of the double vertex graphs, of some families of graphs. 
\end{abstract}

{\it Keywords:}  Double vertex graphs;  pair graphs; independence number.\\
{\it AMS Subject Classification Numbers:}    05C69; 05C76.

\section{Introduction.}
Let $G$ be a graph of order $n$. The double vertex graph of $G$ is defined as the graph with vertex set all $2$-subsets of $V(G)$, where two vertices are adjacent in $F_2(G)$ whenever their symmetric difference is an edge of $G$. This concept, and its generalization called $k$-token graphs, has been redefined several times and with different names. The double vertex graphs were defined and widely studied by Alavi et al.~\cite{alavi1, alavi2, alavi3}, but we can find them earlier in a thesis of G. Johns~\cite{johns}, with the name of the $2$-subgraph graph of $G$. T. Rudolph \cite{rudolph} redefined the double vertex graphs with the name of symmetric powers of graphs and used this graphs to studied the graph isomorphism problem and to study some problems in quantum mechanics and has motivated several works of different authors, see, e.g., \cite{alzaga, aude, barghi, fisch} and the references therein. Later, R. Fabila-Monroy, et. al. \cite{FFHH} reintroduce this concept but now with the name of token graphs, where the double vertex graphs are precisely the $2$-token graphs, and studied several combinatorial properties of this graphs such as: connectivity, diameter, cliques, chromatic number and Hamiltonian paths. After this work, there are a lot of results about different combinatorial parameters of token graphs, see for example \cite{dealba, beaula, token2, deepa, gomez, rive-tru, leatrujillo}. 

In H. de Alba, et. al. \cite{alba} began the study of the independence number of $k$-token graphs and in particular for the double vertex graphs of some special graphs such as: paths, cycles, complete bipartite graphs, star graphs,  etc. A subset $I$ of vertices of $G$ is an \emph{independent set} if no two vertices in $I$ are adjacent. The \emph{independence number} $\alpha(G)$ of  $G$ is the number of vertices in a largest independent set in $G$. it is know that to determine the independence number is an NP-hard~\cite{karp} problem in its generality.

In this work we obtain the independence number of the double vertex graphs of fan graphs and wheel graphs.  The {\it fan graph} $F_{m, 1}$ is defined as the join graph $P_m+K_1$, where $P_m$ denote the path graph of order $n$ and $K_1$ the complete graph of order $1$, and wheel graph $W_{m, 1}$ is defined as the joint graph $C_m+K_1$, where $C_m$ denote the cycle of order $m$. Our main results about independence number of double vertex graphs are the following:
\begin{theorem}\label{theorem01}
Let $m \geq 2$ be an integer. Then 
\[ 
\alpha\left(F_{m,1}^{(2)}\right)= \left\lfloor \frac{m^{2}}{4}\right\rfloor.
\]
\end{theorem}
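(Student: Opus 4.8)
\emph{Proof proposal.} Write $V(F_{m,1})=\{v_1,\dots,v_m\}\cup\{c\}$, where $v_1v_2\cdots v_m$ is the path $P_m$ and $c$ is the vertex of $K_1$ joined to every $v_i$. The vertices of $F_{m,1}^{(2)}=F_2(F_{m,1})$ are the $2$-subsets of $V(F_{m,1})$, and two of them are adjacent precisely when their symmetric difference is an edge of $F_{m,1}$, i.e.\ either $\{v_i,v_{i+1}\}$ for some $i$, or $\{c,v_i\}$ for some $i$. The plan is to prove the two inequalities separately: a parity construction for the lower bound, and a short induction on $m$ for the upper bound.

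For the \textbf{lower bound} I would take
\[
I=\bigl\{\,\{v_i,v_j\}\;:\;1\le i<j\le m,\ i+j\text{ odd}\,\bigr\}.
\]
If $\{v_i,v_j\}$ and $\{v_k,v_l\}$ were adjacent, their symmetric difference would be an edge of $F_{m,1}$ containing no $c$, hence of the form $\{v_a,v_{a+1}\}$; this forces the two pairs to be $\{v_a,v_z\}$ and $\{v_{a+1},v_z\}$ for a common index $z$, but $a+z$ and $(a+1)+z$ have opposite parities, so they cannot both be odd. Hence $I$ is independent (note this uses that the symmetric difference cannot involve $c$, so independence holds in $F_{m,1}^{(2)}$ and not merely inside $F_2(P_m)$). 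Counting, $|I|$ is the number of $2$-subsets of $\{1,\dots,m\}$ with one odd and one even element, namely $\lceil m/2\rceil\lfloor m/2\rfloor=\lfloor m^2/4\rfloor$, giving $\alpha(F_{m,1}^{(2)})\ge\lfloor m^2/4\rfloor$.

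For the \textbf{upper bound} I would induct on $m$; the base case $m=2$ is immediate since $F_{2,1}^{(2)}$ is a triangle and $\lfloor 2^2/4\rfloor=1$. For $m\ge 3$, deleting $v_m$ from $F_{m,1}$ yields $F_{m-1,1}$, and for any graph $G$ and vertex $w$ the graph $F_2(G-w)$ is the subgraph of $F_2(G)$ induced by the $2$-subsets avoiding $w$ (adjacency depends only on the symmetric difference). Let $I$ be a maximum independent set of $F_{m,1}^{(2)}$ and write $I=I'\cup I''$, where $I''$ collects the members of $I$ containing $v_m$. Then $I'$ is independent in $F_{m-1,1}^{(2)}$, so $|I'|\le\lfloor (m-1)^2/4\rfloor$ by induction. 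The vertices containing $v_m$ are $\{v_m,v_1\},\dots,\{v_m,v_{m-1}\},\{v_m,c\}$; here $\{v_m,c\}$ is adjacent to every $\{v_m,v_i\}$ (symmetric difference $\{v_i,c\}$), while $\{v_m,v_i\}\sim\{v_m,v_j\}$ iff $|i-j|=1$, so the subgraph induced on these $m$ vertices is a path on $m-1$ vertices plus a dominating vertex, of independence number $\lceil (m-1)/2\rceil=\lfloor m/2\rfloor$; thus $|I''|\le\lfloor m/2\rfloor$. Adding, $\alpha(F_{m,1}^{(2)})\le\lfloor (m-1)^2/4\rfloor+\lfloor m/2\rfloor=\lfloor m^2/4\rfloor$, the last step being a routine check on the two cases $m$ even and $m$ odd.

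\textbf{Where the difficulty lies.} There is no substantial obstacle: the only care needed is (i) confirming that the parity set is independent in the whole graph $F_{m,1}^{(2)}$ rather than just in $F_2(P_m)$, which is exactly why the argument tracks that the symmetric difference avoids $c$; and (ii) the bookkeeping in the induction—identifying $F_2(F_{m,1}-v_m)$ with $F_{m-1,1}^{(2)}$ as an induced subgraph, and verifying the elementary identities $\lfloor (m-1)^2/4\rfloor+\lfloor m/2\rfloor=\lfloor m^2/4\rfloor$ and $\lceil (m-1)/2\rceil=\lfloor m/2\rfloor$. For a non-inductive alternative one could instead cover $F_{m,1}^{(2)}$ by $\lfloor m/2\rfloor$ triangles of the form $\{\{c,v_{2t-1}\},\{c,v_{2t}\},\{v_{2t-1},v_{2t}\}\}$ together with a perfect matching of the remaining $2$-subsets $\{v_i,v_j\}$ using the grid-type edges among them; showing that the leftover region always admits such a matching is the only slightly delicate point of that route, and I would prefer the induction to avoid it.
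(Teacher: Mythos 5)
Your proof is correct, but it follows a genuinely different route from the paper. You prove the lower bound by an explicit parity construction (all pairs $\{v_i,v_j\}$ with $i+j$ odd), and the upper bound by induction on $m$, deleting the path endpoint $v_m$: the pairs of a maximum independent set avoiding $v_m$ live in $F_2(F_{m,1}-v_m)\cong F_2(F_{m-1,1})$ and are bounded by the inductive hypothesis, while the pairs containing $v_m$ induce a copy of the fan $F_{m-1,1}$ and hence number at most $\lceil (m-1)/2\rceil=\lfloor m/2\rfloor$; the identity $\lfloor (m-1)^2/4\rfloor+\lfloor m/2\rfloor=\lfloor m^2/4\rfloor$ closes the argument, and all the steps you flag (independence of the parity set in the whole graph, the induced-subgraph identification, the floor identities) check out, including the base case $F_{2,1}^{(2)}\cong K_3$ and the case $m=3$, which the paper has to verify separately. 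The paper instead splits an independent set according to whether a pair contains the hub vertex $m+1$, and this forces the heavier machinery of Propositions 3.1 and 3.2: the exact value of $\alpha(T_m-R_i)$ via component decompositions and grid-graph independence numbers, and the strict inequality $\alpha(T_m-\cup_{i\in S}R_i)<\alpha(T_{m-1})$ when $|S|\geq 2$, together with the known value $\alpha(P_m^{(2)})=\lfloor m^2/4\rfloor$ for the lower bound. What your approach buys is brevity and self-containment, avoiding the row-deletion analysis entirely; what the paper's approach buys is reusable infrastructure, since the proof of Theorem 1.2 for the wheel graph explicitly invokes Case 3 of the paper's proof of Theorem 1.1 and Proposition 3.2, so the hub-splitting lemmas are not wasted effort there, whereas your induction would have to be adapted separately (deleting a cycle vertex of $W_{m,1}$ yields a fan, so a similar but not identical argument would still be available).
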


\begin{theorem}\label{theorem02}
Let $m \geq 4$ be and integer. 
Then 
\[
\al\left(W_{m,1}^{(2)}\right)=\left\lfloor \frac{m}{2}  \left\lfloor   \frac{m}{2} \right\rfloor \right\rfloor.
\]
\end{theorem}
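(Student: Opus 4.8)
The plan is to describe $W_{m,1}^{(2)}$ explicitly and reduce the problem to a counting question on the cycle. Write $V(W_{m,1})=\{h\}\cup\{v_0,\dots,v_{m-1}\}$, with $h$ the hub and $v_0v_1\cdots v_{m-1}v_0$ the rim cycle $C_m$ (indices mod $m$). The $\binom{m+1}{2}$ vertices of $W_{m,1}^{(2)}$ split into $m$ \emph{hub-vertices} $e_i:=\{h,v_i\}$ and $\binom m2$ \emph{rim-vertices} $\{v_i,v_j\}$, and a short computation with symmetric differences yields three adjacency rules: $e_i\sim e_j\iff v_iv_j\in E(C_m)$; $e_i\sim\{v_j,v_k\}\iff i\in\{j,k\}$; and $\{v_i,v_j\}\sim\{v_k,v_l\}\iff\{i,j\}\bigtriangleup\{k,l\}\in E(C_m)$. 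Thus the hub-vertices induce a copy of $C_m$, the rim-vertices induce a copy of $C_m^{(2)}$, and each $e_i$ is adjacent to exactly the rim-vertices containing $v_i$. For the lower bound, since $C_m^{(2)}$ is an induced subgraph of $W_{m,1}^{(2)}$ we get $\alpha(W_{m,1}^{(2)})\ge\alpha(C_m^{(2)})=\lfloor\frac m2\lfloor\frac m2\rfloor\rfloor$ from \cite{alba}; alternatively one exhibits this set directly, namely for $m$ even all $\{v_i,v_j\}$ with $i+j$ odd, and for $m$ odd all rim-vertices of odd ``type'' $\min(|i-j|,m-|i-j|)$, taking inside the top type a maximum independent set of the $m$-cycle it spans.

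For the upper bound, let $I$ be a maximum independent set of $W_{m,1}^{(2)}$, let $U=\{i:e_i\in I\}$, and set $k=|U|$. Since the hub-vertices induce $C_m$, $U$ is independent in $C_m$, so $k\le\lfloor m/2\rfloor$ and the indices in $U$ are pairwise non-consecutive; moreover every rim-vertex of $I$ is a $2$-subset of $\mathbb Z_m\setminus U$. If $k=0$ then $I$ lies in the rim-part, whence $|I|\le\alpha(C_m^{(2)})=\lfloor\frac m2\lfloor\frac m2\rfloor\rfloor$. Assume $k\ge1$ and let $A_1,\dots,A_k$ be the arcs of $C_m$ cut out by $U$ (the maximal runs of consecutive non-$U$ indices), with $a_p=|A_p|\ge1$ and $\sum_p a_p=m-k$. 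The key structural point is that in $C_m^{(2)}$, moving one element of a $2$-subset of $\mathbb Z_m\setminus U$ to a $C_m$-neighbour again outside $U$ cannot leave its arc, because the unique $C_m$-neighbour of an arc-endpoint lying outside that arc belongs to $U$. Hence the subgraph of $C_m^{(2)}$ induced on $\binom{\mathbb Z_m\setminus U}{2}$ is the disjoint union
\[
\Bigl(\bigsqcup_{p=1}^{k}P_{a_p}^{(2)}\Bigr)\ \sqcup\ \Bigl(\bigsqcup_{1\le p<q\le k}P_{a_p}\,\square\,P_{a_q}\Bigr),
\]
the within-arc $2$-subsets giving token graphs of paths and the cross-arc ones giving grid graphs (the case $a_p=1$ degenerating to an empty graph, resp. to a path). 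Using $\alpha(P_n^{(2)})=\lfloor n^2/4\rfloor$ \cite{alba} and the standard $\alpha(P_a\square P_b)=\lceil ab/2\rceil$, this yields
\[
|I|\ \le\ k+\sum_{p=1}^{k}\Bigl\lfloor\tfrac{a_p^{2}}{4}\Bigr\rfloor+\sum_{1\le p<q\le k}\Bigl\lceil\tfrac{a_pa_q}{2}\Bigr\rceil .
\]

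The remaining task is to show this bound never exceeds $\lfloor\frac m2\lfloor\frac m2\rfloor\rfloor$ when $k\ge1$. Since $\sum_p\frac{a_p^2}{4}+\sum_{p<q}\frac{a_pa_q}{2}=\frac14\bigl(\sum_p a_p\bigr)^2=\frac14(m-k)^2$, and since passing from these exact halves to floors/ceilings costs $\frac14$ per odd $a_p$ but gains $\frac12$ per pair of odd $a_p$'s, writing $s$ for the number of odd parts the bound equals $k+\frac14(m-k)^2+\frac14(s^2-2s)$, with $0\le s\le k$ and $s\equiv m-k\pmod2$. Because $s^2-2s$ is nondecreasing for $s\ge1$, one plugs in the largest admissible $s$ (namely $s=k$ when $m$ is even, and $s\le k-1$ when $m$ is odd, with the tiny cases $k\le 3$ handled by hand) and simplifies: for $m$ even the bound becomes $\frac{m^2}{4}-\frac k2(m-1-k)<\frac{m^2}{4}=\lfloor\frac m2\lfloor\frac m2\rfloor\rfloor$ for $1\le k\le m/2$ and $m\ge4$; for $m=2t+1$ it is at most $t^2+1$, the maximum over $k\ge1$ being attained at $k=1$, and $t^2+1\le t^2+\lfloor t/2\rfloor=\lfloor\frac m2\lfloor\frac m2\rfloor\rfloor$ since $m\ge5$. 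Together with the $k=0$ case and the lower bound this proves the theorem. I expect the main obstacle to be making the arc argument fully airtight — that is, verifying that the induced rim-subgraph really is the displayed disjoint union of path-token-graphs and grids in all configurations, including the degenerate short arcs; the concluding optimization is elementary but requires careful bookkeeping of the floor/ceiling corrections and of the parity of $m-k$.
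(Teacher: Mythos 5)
Your proof is correct, and its skeleton is the one the paper uses: the same partition of $V(W_{m,1}^{(2)})$ into the hub copy of $C_m$ and the rim copy of $C_m^{(2)}$, the same lower bound via the induced $C_m^{(2)}$ and the value $\alpha(C_m^{(2)})=\lfloor m\lfloor m/2\rfloor/2\rfloor$ from \cite{alba}, and the same key observation that if $e_i\in I$ then no rim-vertex of $I$ may contain $v_i$, so the rim part of $I$ lies in the token graph of $C_m$ with the hub-index set $U$ deleted. Where you genuinely diverge is in bounding that rim part. The paper never decomposes into arcs: for $|U|=1$ it uses $H_m-R_x\simeq T_{m-1}\simeq P_{m-1}^{(2)}$, and for $|U|\ge 2$ it passes through the deletion isomorphism with $F_{m-1,1}^{(2)}$ and invokes its fan-graph exchange lemma (Proposition~\ref{coroimport1}) to get $\alpha(H_m-\cup_{i\in U}R_i)\le\alpha(T_{m-2})$, closing with $\lfloor (m-2)^2/4\rfloor+\lfloor m/2\rfloor\le\lfloor m\lfloor m/2\rfloor/2\rfloor$. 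You instead split $C_m-U$ into the $k=|U|$ arcs and optimize $k+\sum_p\lfloor a_p^2/4\rfloor+\sum_{p<q}\lceil a_pa_q/2\rceil$ over compositions of $m-k$. Your structural worry is unfounded: the displayed disjoint union is exactly Propositions~\ref{pborrado} and~\ref{componentes} applied to $C_m-U$ (with the $a_p=1$ degeneracies as you note), and $\lceil a_pa_q/2\rceil$ agrees with Proposition~\ref{ind-malla}. Your optimization also checks out: with $s$ odd parts the bound equals $k+\tfrac14(m-k)^2+\tfrac14(s^2-2s)$, maximized at $s=k$ for $m$ even and $s=k-1$ for $m$ odd, giving $\tfrac{m^2}{4}-\tfrac k2(m-1-k)\le\tfrac{m^2}{4}$ (since $1\le k\le m/2\le m-2$), respectively $t^2+t+1-\tfrac12k(m-k)\le t^2+1\le t^2+\lfloor t/2\rfloor$ for $m=2t+1\ge 5$. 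In trade-off terms: the paper's route is shorter because it recycles the fan-graph machinery already developed for Theorem~\ref{theorem01}, while yours is self-contained (no exchange argument, no relabeling through the fan isomorphism), treats all $k\ge 1$ uniformly, and as a by-product shows the deficit $\tfrac k2(m-1-k)$ explicitly, at the cost of the floor/ceiling and parity bookkeeping.
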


\subsection{Pair graph of graphs}
Let $G$ be a graph of order $n\geq 2$. The {\it pair graph} $C(G)$ of $G$ is the graph whose vertex set consists of all $2$-multisets of  $V(G)$ where tho vertices $\{x, y\}$ and $\{u, v\}$ are adjacent if and only if $\{x, y\}\cap \{u, v\}=\{a\}$ and if $x=u=a$, then $y$ and $v$ are adjacent in $G$. The pair graphs, also called {\it complete double vertex graph}, of a graph, were implicitly introduced by Chartrand et al. \cite{char} and defined explicitly by Jacob et. al. \cite{jacob}, were the first combinatorial properties were studied. The pair graphs are a generalization of the double vertex graphs and $G$ is always isomorphic to a subgraph of $C(G)$. 

For the case of the independence number of complete double vertex graphs we have the following results.

\begin{theorem}\label{theorem03}
If $m\geq 3$ is an integer, then
 \[
\alpha(C(P_{m}))=\left\lfloor \frac{(m+1)^{2}}{4}\right\rfloor.
 \]
\end{theorem}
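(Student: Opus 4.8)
The plan is to identify $C(P_m)$ with a concrete ``triangular grid'' graph and then squeeze its independence number between matching lower and upper bounds, both obtained from the same column decomposition / bipartition.

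\emph{Structure of $C(P_m)$.} Label $V(P_m)=\{1,\dots ,m\}$ with $i\sim i+1$. A vertex of $C(P_m)$ is a $2$-multiset $\{a,b\}$ with $1\le a\le b\le m$, and unwinding the definition, two such multisets are adjacent exactly when one is obtained from the other by replacing one entry by a $P_m$-neighbour of it; equivalently, $C(P_m)$ is the subgraph of the grid graph $P_m\,\square\,P_m$ induced on $\{(a,b):1\le a\le b\le m\}$. The first thing I would do is record this carefully, paying attention to the doubled multisets $\{a,a\}$ on the diagonal, whose only neighbours are $\{a-1,a\}$ and $\{a,a+1\}$. A consequence I will use twice: moving one token changes the total $a+b$ by exactly $\pm 1$, so $C(P_m)$ is bipartite with colour classes $E=\{\{a,b\}: a+b\text{ even}\}$ and $O=\{\{a,b\}: a+b\text{ odd}\}$.

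\emph{Lower bound.} Take $I=E$, which is an independent set because $C(P_m)$ is bipartite with this colouring. Counting $|I|$ by splitting into the pairs with $a,b$ both odd and both even gives $|I|=\binom{\lceil m/2\rceil+1}{2}+\binom{\lfloor m/2\rfloor+1}{2}$, which a short case check ($m$ even vs.\ odd) shows equals $\left\lfloor (m+1)^2/4\right\rfloor$. Hence $\alpha(C(P_m))\ge \left\lfloor (m+1)^2/4\right\rfloor$.

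\emph{Upper bound.} Partition $V(C(P_m))$ into the columns $R_a=\{\{a,b\}: a\le b\le m\}$, $a=1,\dots ,m$. The subgraph induced on $R_a$ is precisely the path $\{a,a\}-\{a,a+1\}-\cdots-\{a,m\}$ on $m-a+1$ vertices (horizontal moves change the smaller entry and leave $R_a$), so every independent set of $C(P_m)$ meets $R_a$ in at most $\lceil (m-a+1)/2\rceil$ vertices. Summing, $\alpha(C(P_m))\le \sum_{a=1}^{m}\lceil (m-a+1)/2\rceil=\sum_{j=1}^{m}\lceil j/2\rceil$, and the latter sum telescopes to $k(k+1)$ when $m=2k$ and to $(k+1)^2$ when $m=2k+1$, i.e.\ to $\left\lfloor (m+1)^2/4\right\rfloor$ in both cases. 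Combining the two bounds yields the theorem.

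The main obstacle is the first step: converting the (somewhat awkwardly phrased) multiset adjacency rule into the clean ``grid / move-one-token'' description and verifying it near the diagonal, together with making sure the colour-class count $|E|$ matches $\left\lfloor (m+1)^2/4\right\rfloor$ on the nose. Once the structure and the column paths are in place, both the independent set and the counting bound are routine, and there is no gap between them.
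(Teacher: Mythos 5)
Your proof is correct, but it takes a genuinely different route from the paper. The paper disposes of this theorem in one step: it proves (Theorem~\ref{iso-cyd}) that $C(P_m)\simeq P_{m+1}^{(2)}$ via the shift map $\{i,j\}\mapsto\{i,j+1\}$ (with $i\le j$), and then quotes the known value $\alpha(P_n^{(2)})=\lfloor n^2/4\rfloor$ from de Alba et al.~\cite{alba}, so no independent set is ever exhibited and no upper-bound argument is made inside the paper. You instead work directly with the triangular-grid realization of $C(P_m)$ (which is essentially the same structural observation as the paper's isomorphism, since $P_{m+1}^{(2)}$ is exactly that half-grid shifted off the diagonal), and you prove both bounds from scratch: the parity class $a+b$ even gives the lower bound $\lfloor(m+1)^2/4\rfloor$, and the partition into the column paths $R_a=\{\{a,b\}:a\le b\le m\}$, each inducing a path on $m-a+1$ vertices, gives the matching upper bound $\sum_{j=1}^m\lceil j/2\rceil$. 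Your adjacency analysis near the diagonal and both counts check out, so the argument is complete. What the paper's route buys is brevity and reuse (the isomorphism $C(P_m)\simeq P_{m+1}^{(2)}$ is invoked again in the proofs of Theorems~\ref{theorem04}--\ref{theorem06}); what your route buys is self-containedness (you do not need the external result of~\cite{alba} --- indeed, via the isomorphism, your column argument reproves $\alpha(P_{m+1}^{(2)})=\lfloor(m+1)^2/4\rfloor$) and an explicit maximum independent set.
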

\begin{theorem}\label{theorem04}
Let $m \geq 1$ be an integer. Then 
\[ 
\alpha(C(F_{m,1}))=\alpha(C(P_{m}))+1
\]
\end{theorem}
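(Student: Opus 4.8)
The plan is to establish the equality $\alpha(C(F_{m,1})) = \alpha(C(P_m)) + 1$ by proving both inequalities, leveraging Theorem \ref{theorem03} which already gives $\alpha(C(P_m)) = \lfloor (m+1)^2/4 \rfloor$. Write $V(F_{m,1}) = \{v_1,\dots,v_m\} \cup \{c\}$, where $c$ is the hub adjacent to all path vertices $v_1,\dots,v_m$ (which themselves induce $P_m$). The vertices of $C(F_{m,1})$ are the $2$-multisets of $V(F_{m,1})$; these split naturally into three classes: (i) multisets contained in $\{v_1,\dots,v_m\}$, which induce a copy of $C(P_m)$; (ii) multisets of the form $\{c, v_i\}$; and (iii) the single multiset $\{c,c\}$. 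First I would analyze the adjacencies of the ``new'' vertices in classes (ii) and (iii) within $C(F_{m,1})$, using the definition of the pair graph carefully: $\{c,c\}$ is adjacent exactly to those $\{c, v_i\}$ with $v_i$ a neighbor of $c$ in $G$, i.e. all of them; $\{c,v_i\}$ is adjacent to $\{c,v_j\}$ iff $v_iv_j \in E(P_m)$, and to $\{v_i,v_j\}$-type and $\{v_j,v_j\}$-type vertices according to the intersection-and-adjacency rule.

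For the lower bound $\alpha(C(F_{m,1})) \geq \alpha(C(P_m)) + 1$, I would take a maximum independent set $I$ in the induced copy of $C(P_m)$ (class (i)) and try to adjoin exactly one vertex from classes (ii)/(iii) that is non-adjacent to all of $I$. The natural candidate is $\{c,c\}$: since $\{c,c\}$ shares no element with any class-(i) multiset $\{v_i,v_j\}$ (as a multiset over $V(G)$, the intersection is empty, not a singleton), $\{c,c\}$ is non-adjacent to every class-(i) vertex, so $I \cup \{\{c,c\}\}$ is independent of size $\alpha(C(P_m)) + 1$. This direction should be essentially immediate once the adjacency bookkeeping is set up.

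The harder direction is the upper bound $\alpha(C(F_{m,1})) \leq \alpha(C(P_m)) + 1$. Here I would take an arbitrary independent set $J$ in $C(F_{m,1})$ and bound $|J|$. Split $J = J_1 \cup J_2 \cup J_3$ according to the three classes. Then $J_1$ is an independent set in the copy of $C(P_m)$, so $|J_1| \leq \alpha(C(P_m))$. The main obstacle is showing $|J_2 \cup J_3| \leq 1$ whenever $J_1$ is already ``large,'' or more precisely producing an injection/exchange argument: if $|J_2 \cup J_3| \geq 2$, I must show one can replace those vertices by class-(i) vertices to get an independent set in $C(P_m)$ of size $\geq |J|$, which would contradict Theorem \ref{theorem03}. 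The key structural fact to exploit is that the class-(ii) vertices $\{c,v_i\}$ form, among themselves, a copy of $P_m$ (adjacency $\{c,v_i\}\sim\{c,v_j\}$ iff $v_i\sim v_j$), and each $\{c,v_i\}$ blocks (is adjacent to) all class-(i) vertices $\{v_j, v_j\}$ with $v_j \sim v_i$ as well as $\{v_i, v_j\}$ with... — I would need the precise list — so an independent set using many class-(ii) vertices forces many ``missing'' potential class-(i) vertices. I would likely set up a weighting or direct counting argument: each vertex of $J_2$ corresponds to a vertex $v_i$ of $P_m$, these form an independent set in $P_m$ itself (if two are adjacent in $P_m$ the corresponding class-(ii) vertices are adjacent in $C(F_{m,1})$), but that alone is too weak, so the refinement is to track which class-(i) vertices are simultaneously excluded. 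I expect the cleanest route is: given $J$, build an independent set $J'$ in $C(P_m)$ by discarding $\{c,c\}$ (losing at most $1$) and, for each $\{c,v_i\} \in J_2$, substituting a suitable class-(i) ``replacement'' multiset (for instance $\{v_i, v_i\}$ or an endpoint-adjacent pair) and then checking that these replacements are pairwise non-adjacent and non-adjacent to $J_1$; the independence of $J$ and the parallel adjacency structures of the two copies of $P_m$ should make the substitution work with at most one unit of loss. Finishing this substitution-and-verification is the bulk of the proof; the arithmetic with $\lfloor (m+1)^2/4\rfloor$ versus $\lfloor m^2/4\rfloor$ never actually needs to be touched, since everything is phrased relative to $\alpha(C(P_m))$.
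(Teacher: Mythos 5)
Your route is genuinely different from the paper's and, once completed, simpler. The paper bounds an independent set $I=I'\cup I''$ (path-pairs plus hub-pairs) through Proposition~\ref{procompleta} and Corollary~\ref{coroimport2}: deleting a set $R_i$ from $T_{m+1}$ leaves components that are smaller pair graphs, paths and grids, whose independence numbers are computed and compared with $\lfloor m^2/4\rfloor+1$, after which $|I''|\le\lfloor (m+1)/2\rfloor$ and Proposition~\ref{propi}(2) finish the arithmetic. Your exchange argument avoids all of that: discard $\{c,c\}$ (at most one vertex) and replace each $\{c,v_i\}$ by $\{v_i,v_i\}$, producing an independent set inside the copy of $C(P_m)$ of size at least $|J|-1$, hence $|J|\le\alpha(C(P_m))+1$ with no component analysis and no floor-function identities. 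The lower bound via $\{c,c\}$ is exactly the paper's.

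However, the proposal stops precisely where the content is, and the one adjacency statement you commit to is wrong: $\{c,v_i\}$ is \emph{not} adjacent to $\{v_j,v_j\}$ for $v_j\sim v_i$ (these multisets are disjoint). The correct fact — the analogue of the paper's observation that $\cup_{i\in S}R_i\cap I'=\emptyset$ — is that the class-(i) neighbours of $\{c,v_i\}$ are exactly the multisets containing $v_i$, i.e.\ $\{v_i,v_j\}$ for every $j$ including $j=i$, because the hub $c$ is adjacent to every $v_j$. With this in hand your substitution verifies immediately: if $\{c,v_i\}\in J$ then no class-(i) multiset containing $v_i$ lies in $J_1$; the only class-(i) neighbours of the replacement $\{v_i,v_i\}$ are $\{v_{i-1},v_i\}$ and $\{v_i,v_{i+1}\}$, which are therefore absent from $J_1$; distinct replacements $\{v_i,v_i\}$, $\{v_j,v_j\}$ are disjoint, hence non-adjacent; and $\{v_i,v_i\}\notin J_1$ itself, being adjacent to $\{c,v_i\}$ (moreover, if $\{c,c\}\in J$ then $J_2=\emptyset$, so at most one unit is ever lost). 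Note also that your preliminary aim of showing $|J_2\cup J_3|\le 1$ is false in general (e.g.\ $\{c,v_1\}$ and $\{c,v_3\}$ can coexist in an independent set), so the exchange really is the proof. Add the verification paragraph above and your argument is complete and cleaner than the paper's; as written, it is a plan with the decisive check left undone and one adjacency misstated.
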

Notice that the series $\{\floor{(m+1)^{2}/4}+1\}_{m \geq 1}$ coincides with A033638$(m+1)$ in OEIS.

\begin{theorem}\label{theorem05}
Let $m \geq 3$ be an integer. Then
\[ 
\alpha(C(C_{m}))=\begin{cases} 
      k(k+1)+\floor{(k+1)/2} & m=2k+1\\
      k(k+1) & m=2k\\
  \end{cases}
\]

\end{theorem}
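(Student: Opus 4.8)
The plan is to decompose $C(C_m)$ into \emph{layers} indexed by the sum of the two coordinates. Identify $V(C_m)$ with $\mathbb{Z}_m$ (cyclic edges $i\sim i+1$), and for $s\in\mathbb{Z}_m$ put $L_s=\{\{a,b\}: a+b\equiv s \pmod m\}$. Unwinding the adjacency rule, a multiset $\{a,b\}$ is adjacent precisely to the multisets $\{a,b\pm1\}$ and $\{a\pm1,b\}$; hence every edge of $C(C_m)$ joins $L_s$ to $L_{s-1}\cup L_{s+1}$ and each $L_s$ is independent. For $m=2k+1$ every layer has exactly $k+1$ elements, which I would parametrise by the difference $\delta\in\{0,1,\dots,k\}$ of the pair ($\delta=0$ being the diagonal multiset $\{a,a\}$); for $m=2k$ the layers with $s$ even have $k+1$ elements and those with $s$ odd have $k$. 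The structural core (for $m=2k+1$) is to show that the subgraph induced on two consecutive layers $L_s\cup L_{s+1}$ is a path $P_{2k+2}$: translating a single token move into the $\delta$-coordinate sends $\delta$ to $\delta\pm1$ with the two boundary folds $-1\mapsto1$ and $k+1\mapsto k$, and the resulting bipartite graph between $\{0_L,\dots,k_L\}$ and $\{0_R,\dots,k_R\}$ is the single zig-zag $0_L,1_R,2_L,3_R,\dots$ that turns around at the $\delta=k$ end (its only degree-one vertices being the two diagonal multisets).

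For the upper bound with $m=2k+1$, let $I$ be independent and $n_s=|I\cap L_s|$. Since $I\cap(L_s\cup L_{s+1})$ is an independent set in a path on $2k+2$ vertices, $n_s+n_{s+1}\le k+1$ for every $s\in\mathbb{Z}_{2k+1}$; summing these $2k+1$ inequalities gives $2\sum_s n_s\le(2k+1)(k+1)$, so
\[
\alpha\bigl(C(C_{2k+1})\bigr)\le\left\lfloor\frac{(2k+1)(k+1)}{2}\right\rfloor=k(k+1)+\left\lfloor\frac{k+1}{2}\right\rfloor .
\]
For the matching lower bound I would pick, inside each layer, the multisets whose difference $\delta$ is even: two even numbers never differ by $1$, so such choices in consecutive layers never conflict, apart from the single conflict between two copies of $\delta=k$. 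Concretely, if $k$ is odd take $I_s=\{0,2,\dots,k-1\}$ in every layer (size $(k+1)/2$); if $k$ is even take $I_s=\{0,2,\dots,k\}$ (size $k/2+1$) when $s$ is odd and $I_s=\{0,2,\dots,k-2\}$ (size $k/2$) when $s$ is even, so that no two consecutive layers both contain $\delta=k$. In either case $\bigcup_s I_s$ is independent, and a short count gives $\bigl|\bigcup_s I_s\bigr|=k(k+1)+\left\lfloor(k+1)/2\right\rfloor$, matching the bound.

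For $m=2k$ the layer cycle has even length, so colouring the layers by the parity of $s$ exhibits $C(C_{2k})$ as bipartite with parts $A=\bigcup_{s\ \text{even}}L_s$ and $B=\bigcup_{s\ \text{odd}}L_s$ of sizes $k(k+1)$ and $k^2$; taking $A$ gives $\alpha\ge k(k+1)$. For the converse I would apply K\"onig's theorem, $\alpha(C(C_{2k}))=k(2k+1)-\nu$ (the graph has $\binom{2k+1}{2}=k(2k+1)$ vertices), together with a matching of size $k^2$ that saturates $B$: for each odd $s$, match $L_s$ into $L_{s+1}$ by a matching of $C(C_m)[L_s\cup L_{s+1}]$ saturating the smaller part $L_s$. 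Such a matching exists because that bipartite graph again has maximum degree $2$ and its two degree-one vertices (the two diagonal multisets of the even layer $L_{s+1}$) both lie on the same side $L_{s+1}$, so it is a union of even cycles and one odd path with both endpoints in $L_{s+1}$; alternatively one checks Hall's condition. These matchings for distinct odd $s$ are vertex-disjoint, so together they saturate $B$, forcing $\nu=k^2$ and $\alpha(C(C_{2k}))=k(2k+1)-k^2=k(k+1)$.

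The part I expect to require genuine care is the structural lemma on $C(C_m)[L_s\cup L_{s+1}]$ for odd $m$ — the exact bookkeeping of how the (unordered-pair) difference coordinate transforms under a single token move, including the two boundary folds at $\delta=0$ and $\delta=k$ — after which the averaging bound, the parity constructions, and the matching argument in the even case are all routine.
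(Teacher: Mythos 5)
Your argument is correct, but it takes a genuinely different route from the paper's. The paper partitions $V(C(C_m))$ into the constant-difference classes $L_q=\{\{j,m-(q-j)\}\colon 1\le j\le q\}$ of sizes $1,2,\dots,m$, determines which are independent and how they are linked (Propositions~\ref{lqciclo} and~\ref{links}), takes a union of alternate classes for the lower bound, and for the upper bound compares with $C(P_m)\simeq P_{m+1}^{(2)}$ (Theorem~\ref{iso-cyd}) when $m$ is even, while for odd $m$ it runs the multi-step exchange procedure of Proposition~\ref{kodd} that pushes an arbitrary independent set into that union. You instead slice $V(C(C_m))$ by the sum of the two entries modulo $m$: each slice is independent, every edge joins consecutive slices, and for $m=2k+1$ two consecutive slices induce a single path on $2k+2$ vertices --- your bookkeeping is right: in the difference coordinate a token move sends $\delta$ to $\delta\pm1$, the two diagonal multisets are the only degree-one vertices, and the extra edge joining the two $\delta=k$ vertices makes the zig-zag turn around, so the bipartite graph is one Hamiltonian path. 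Hence $n_s+n_{s+1}\le k+1$, and summing around the odd cycle of slices gives $|I|\le\lfloor(2k+1)(k+1)/2\rfloor=k(k+1)+\lfloor(k+1)/2\rfloor$; your even-$\delta$ selections attain this (the wrap-around pair of even-indexed slices is harmless since those slices omit $\delta=k$ when $k$ is even). For $m=2k$ your bipartition into parts of sizes $k(k+1)$ and $k^2$, a $B$-saturating matching built layer by layer, and K\"onig's theorem give $\alpha\le k(k+1)$; the paper gets this more cheaply from $\alpha(C(C_{2k}))\le\alpha(C(P_{2k}))=\lfloor(2k+1)^2/4\rfloor$, but your version is self-contained and does not rely on the known value of $\alpha(P_{m+1}^{(2)})$. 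The trade-off: your approach requires the structural lemma about consecutive slices (which you correctly flag, and which does hold as described), but in exchange the averaging bound replaces the paper's delicate and rather sketchy exchange argument in the odd case, which is the hardest part of the paper's proof.
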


\begin{theorem}\label{theorem06}

Let $m \geq 3$ be an integer. Then
\[
\alpha(C(W_{m, 1}))=\alpha(C(C_m))+1.
\]
\end{theorem}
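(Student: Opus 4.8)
The plan is to establish the two-sided estimate $\alpha(C(W_{m,1}))=\alpha(C(C_m))+1$ directly by comparing independent sets of the two pair graphs, and then read off the closed formula from Theorem~\ref{theorem05}. Write $h$ for the hub of $W_{m,1}=C_m+K_1$ and $c_1,\dots,c_m$ for the cycle vertices. This parallels the fan-graph case (Theorem~\ref{theorem04}); in fact the argument below proves $\alpha(C(H+K_1))=\alpha(C(H))+1$ for an arbitrary graph $H$.

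The first step is to pin down the local structure of $C(W_{m,1})$. Its vertices fall into three classes: the $2$-multisets contained in $\{c_1,\dots,c_m\}$, which induce a copy of $C(C_m)$ because $C_m$ is an induced subgraph of $W_{m,1}$; the $m$ vertices $\{h,c_i\}$; and the single vertex $\{h,h\}$. Working from the definition of the pair graph I would verify: (a) $\{h,h\}$ is adjacent to every $\{h,c_i\}$ and to nothing else, since it shares no vertex with any multiset of cycle vertices; (b) $\{h,c_i\}$ is adjacent to a multiset $\{c_a,c_b\}$ of cycle vertices exactly when $c_i\in\{c_a,c_b\}$ --- the directional clause ``if $x=u=a$ then $y\sim v$'' is satisfied automatically because $h$ is adjacent to all of $C_m$; and (c) $\{h,c_i\}\sim\{h,c_j\}$ iff $c_ic_j\in E(C_m)$. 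Granting (a), the lower bound is immediate: a maximum independent set $J$ of $C(C_m)$ uses only cycle multisets, so $J\cup\{\{h,h\}\}$ is independent in $C(W_{m,1})$, giving $\alpha(C(W_{m,1}))\ge\alpha(C(C_m))+1$.

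For the upper bound I would take an arbitrary independent set $I$ of $C(W_{m,1})$ and split it as $I=I_0\cup I_h$, with $I_0$ the cycle multisets in $I$ and $I_h$ the multisets meeting $h$. Since the cycle multisets induce $C(C_m)$, the set $I_0$ is independent there, so $|I_0|\le\alpha(C(C_m))$. If $I_h=\varnothing$ or $I_h=\{\{h,h\}\}$ we already have $|I|\le\alpha(C(C_m))+1$, so suppose some $\{h,c_v\}$ lies in $I_h$. By (a) this forces $\{h,h\}\notin I$, hence $I_h=\{\{h,c_{v_1}\},\dots,\{h,c_{v_t}\}\}$ with the $v_\ell$ distinct, and by (b) no member of $I_0$ may contain any $c_{v_\ell}$. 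The key move is then the substitution $I\mapsto I'=I_0\cup\{\{c_{v_1},c_{v_1}\},\dots,\{c_{v_t},c_{v_t}\}\}$: each doubled vertex $\{c_{v_\ell},c_{v_\ell}\}$ shares no vertex with any other element of $I'$ (the $v_\ell$ are distinct and, by (b), absent from $I_0$), so it is non-adjacent to everything in $I'$, whence $I'$ is independent in $C(C_m)$ with $|I'|=|I_0|+t=|I|$. Thus $|I|\le\alpha(C(C_m))$ in this last case too, and combining the cases yields $\alpha(C(W_{m,1}))\le\alpha(C(C_m))+1$.

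The proof is short, so the real work lies entirely in the first step: the pair-graph adjacency rule is asymmetric and lives on multisets, so the adjacencies in (a)--(c) --- and in particular the claim that a doubled cycle vertex is isolated from every multiset avoiding it --- must be checked from the definition rather than imported from the double vertex graph. Once that bookkeeping is done, the substitution $I\mapsto I'$ does the rest, and Theorem~\ref{theorem05} supplies the explicit value: $\alpha(C(W_{m,1}))=k(k+1)+1$ when $m=2k$, and $\alpha(C(W_{m,1}))=k(k+1)+\lfloor(k+1)/2\rfloor+1$ when $m=2k+1$.
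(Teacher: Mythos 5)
Your proof is correct, and it takes a genuinely different and noticeably simpler route than the paper's. The paper splits $C(W_{m,1})$ into the copy $U_m$ of $C(C_m)$ and the hub part $B$, bounds $|I\cap U_m|$ through the row-deletion machinery (Proposition~\ref{procompleta2}, via $U_m-R_i\simeq C(P_{m-1})\simeq P_m^{(2)}$), and then compares the resulting numerical bounds $\lfloor m^2/4\rfloor+1$ and $\lfloor (m-1)^2/4\rfloor+\lfloor m/2\rfloor$ against the closed formula of Theorem~\ref{theorem05}, with separate parity cases for $m$. Your argument bypasses all of that: the exchange $\{h,c_{v_\ell}\}\mapsto\{c_{v_\ell},c_{v_\ell}\}$ turns any independent set meeting the hub pairs into an independent set of $C(C_m)$ of the same cardinality, because in a pair graph a doubled vertex $\{x,x\}$ is adjacent only to multisets containing $x$, and two distinct doubled vertices are never adjacent; combined with the fact that $\{h,h\}$ is adjacent only to the pairs $\{h,c_i\}$, this yields both inequalities with no formulas at all. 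Your adjacency checks (a)--(c), carried out from the multiset definition rather than imported from the double vertex graph, are exactly the points that needed verification and they are right; note that (c) is not even used in the upper bound. Two further observations: since the only structural input is that the apex $h$ is adjacent to all of $V(H)$, your argument proves $\alpha(C(H+K_1))=\alpha(C(H))+1$ for an arbitrary graph $H$, so it subsumes Theorem~\ref{theorem04} as well and needs Theorem~\ref{theorem05} only to write out the explicit value; and in your final case you in fact get the stronger bound $|I|\le\alpha(C(C_m))$, showing that the maximum is achieved only by adjoining $\{h,h\}$ to a maximum independent set of $C(C_m)$.
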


In the rest of the papers we prove all these results in different sections.

\section{Preliminary  results}

In the proofs of some of our results, we use the following known facts. 

\begin{lemma}\label{lemasubg}
If $H$ is an induced subgraph of $G$, then $\alpha(H)\leq \alpha(G)$.   
\end{lemma}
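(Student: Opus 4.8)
The plan is to prove the inequality directly by exhibiting a maximum independent set of $H$ as an independent set of $G$, so the argument reduces to carefully unpacking the two definitions involved: independent set and induced subgraph.

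First I would let $I$ be a maximum independent set of $H$, so that $|I|=\alpha(H)$ and no two vertices of $I$ are adjacent in $H$. Since $H$ is an induced subgraph of $G$, we have $V(H)\subseteq V(G)$, and therefore $I\subseteq V(G)$ as well. The goal is then simply to check that this same set $I$ remains independent when viewed inside $G$.

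The key step uses the defining property of an induced subgraph: for any two vertices $u,v\in V(H)$, the pair $uv$ is an edge of $H$ if and only if it is an edge of $G$. Hence if $u,v\in I$ are nonadjacent in $H$, they are also nonadjacent in $G$. This shows $I$ is an independent set of $G$, and so $\alpha(G)\geq |I|=\alpha(H)$, as desired.

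There is no genuine obstacle in this argument; the only point that deserves attention is that the hypothesis ``induced'' must be invoked at exactly the step transferring nonadjacency from $H$ to $G$. This is precisely where an arbitrary (non-induced) subgraph could fail, since such a subgraph may omit edges of $G$ and thereby render a set independent in $H$ that is not independent in $G$.
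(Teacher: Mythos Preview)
Your proof is correct and entirely standard. The paper itself does not supply a proof of this lemma; it is merely listed among the ``known facts'' in the preliminaries, so there is nothing to compare against beyond noting that your argument is the expected one. Your remark that the ``induced'' hypothesis is exactly what transfers nonadjacency from $H$ to $G$ is accurate and pinpoints why the statement can fail for arbitrary subgraphs.
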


Let $G\square H$ denote the cartesian product of graphs $G$ and $H$.

\begin{proposition}\label{ind-malla}
Let $r$ and $s$ be positive integers. Then
\[
\al(P_r\square P_s)=\left\lceil \frac{r}{2}\right\rceil\left\lceil \frac{s}{2}\right\rceil+\left(r-\left\lceil \frac{r}{2}\right\rceil\right)\left(s-\left\lceil \frac{s}{2}\right\rceil \right)
\]
\end{proposition}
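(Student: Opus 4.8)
The plan is to recognise $P_r\square P_s$ as the $r\times s$ grid graph, label its vertices by pairs $(i,j)$ with $1\le i\le r$ and $1\le j\le s$ (two pairs being adjacent exactly when they agree in one coordinate and differ by $1$ in the other), and then to sandwich $\alpha$ between a matching-based upper bound and an explicit independent set, observing that the two bounds meet at the claimed value --- which, one checks, is just $\lceil rs/2\rceil$.

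For the lower bound I would take $I=\{(i,j): i\equiv j\pmod 2\}$. This set is independent because every edge changes the parity of exactly one coordinate, hence of $i+j$. Counting by the two sub-cases ($i,j$ both odd; $i,j$ both even) gives $|I|=\lceil r/2\rceil\lceil s/2\rceil+\lfloor r/2\rfloor\lfloor s/2\rfloor$, and since $\lfloor r/2\rfloor=r-\lceil r/2\rceil$ and $\lfloor s/2\rfloor=s-\lceil s/2\rceil$, this is exactly the right-hand side of the proposition; hence $\alpha(P_r\square P_s)$ is at least that value.

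For the upper bound I would partition $V(P_r\square P_s)$ into edges together with at most one leftover vertex. If $r$ is even, the edges $(2k-1,j)(2k,j)$ for $1\le k\le r/2$ and $1\le j\le s$ form a perfect matching (the case $s$ even is symmetric). If $r$ and $s$ are both odd, the edges $(2k-1,j)(2k,j)$ with $1\le k\le (r-1)/2$ cover rows $1,\dots,r-1$, and $(r,2\ell-1)(r,2\ell)$ with $1\le \ell\le (s-1)/2$ cover row $r$ except for the single vertex $(r,s)$. Any independent set contains at most one endpoint of each of these edges and at most the one leftover vertex, so $\alpha(P_r\square P_s)\le rs/2$ when $rs$ is even and $\le (rs+1)/2$ when $rs$ is odd, that is, $\alpha(P_r\square P_s)\le\lceil rs/2\rceil$ in all cases.

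It remains to see that the right-hand side of the proposition equals $\lceil rs/2\rceil$, which closes the sandwich. Writing $a=\lceil r/2\rceil$, $a'=\lfloor r/2\rfloor$, $b=\lceil s/2\rceil$, $b'=\lfloor s/2\rfloor$, one has $2(ab+a'b')=(a+a')(b+b')+(a-a')(b-b')=rs+(a-a')(b-b')$; since $(a-a')(b-b')\in\{0,1\}$ and equals $1$ precisely when both $r$ and $s$ are odd (equivalently, when $rs$ is odd), in either parity $ab+a'b'=\lceil rs/2\rceil$. I expect no conceptual obstacle here: the only real work is this parity case analysis, the underlying principle being simply that a bipartite graph admitting a near-perfect matching has independence number equal to the size of its larger colour class.
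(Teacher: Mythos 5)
Your proof is correct, and in fact it supplies something the paper does not: Proposition~\ref{ind-malla} is listed there among the ``known facts'' in the preliminaries and is stated without any proof or explicit argument, so there is no paper proof to compare against. Your argument is the standard one and is complete: the parity class $\{(i,j): i\equiv j \pmod 2\}$ is independent and has size $\lceil r/2\rceil\lceil s/2\rceil+\lfloor r/2\rfloor\lfloor s/2\rfloor$, which is exactly the stated right-hand side; the partition of the vertex set into a near-perfect matching plus at most one leftover vertex gives $\alpha(P_r\square P_s)\le\lceil rs/2\rceil$; and the identity $2\left(\lceil r/2\rceil\lceil s/2\rceil+\lfloor r/2\rfloor\lfloor s/2\rfloor\right)=rs+(\lceil r/2\rceil-\lfloor r/2\rfloor)(\lceil s/2\rceil-\lfloor s/2\rfloor)$ closes the sandwich, since the correction term is $1$ exactly when $rs$ is odd. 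All three steps check out (the matching construction in the odd--odd case does leave only $(r,s)$ uncovered), so your write-up would serve as a self-contained justification of the proposition that the paper merely cites.
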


\begin{proposition}\label{componentes}
If $G=\cup_{i=1}^{k}G_i$, where $G_i$ is a component of $G$ with $|G_i|\geq 2$, for every $i$, then 
\[
G^{(2)}=\bigcup_{i=1} G_i^{(2)} \bigcup_{\substack{i, j=1\\ i \neq j}}^kG_{ij},
\]
where $G_{ij}\simeq G_i \square G_j$. 
\end{proposition}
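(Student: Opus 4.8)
The plan is to verify the claimed decomposition directly from the definition of the double vertex graph, treating it as a bookkeeping exercise on the vertex set $\binom{V(G)}{2}$ together with a careful case analysis of when the symmetric-difference condition can produce an edge.

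First I would partition the vertex set $V(G^{(2)})=\binom{V(G)}{2}$ according to which components the two chosen vertices lie in. A $2$-subset $\{a,b\}$ either has $a,b\in V(G_i)$ for a single index $i$, in which case it is exactly a vertex of $G_i^{(2)}$; or it has $a\in V(G_i)$ and $b\in V(G_j)$ with $i\neq j$, and the collection of all such $2$-subsets (for a fixed unordered pair $\{i,j\}$) is in natural bijection with $V(G_i)\times V(G_j)=V(G_i\square G_j)$. This exhausts $V(G^{(2)})$ and already matches the vertex sets on the two sides of the asserted equality, with $G_{ij}:=G_i\square G_j$.

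Next I would match up the edges. If $A,B$ are adjacent in $G^{(2)}$ then $A\triangle B\in E(G)$, and since $|A\triangle B|=2$ we must have $|A\cap B|=1$; write $A=\{x,z\}$, $B=\{y,z\}$ with $xy\in E(G)$. As every edge of $G$ lies inside one component, $x$ and $y$ belong to the same $G_i$. If also $z\in V(G_i)$, then $A$ and $B$ are $2$-subsets of $V(G_i)$ and $A\triangle B\in E(G_i)$, so this edge lies in $G_i^{(2)}$. If instead $z\in V(G_j)$ with $j\neq i$, then under the bijection above $A\mapsto(x,z)$ and $B\mapsto(y,z)$, whose second coordinates agree while $xy\in E(G_i)$, so this is an edge of $G_i\square G_j$; the symmetric situation in which the common vertex lies in $V(G_i)$ produces the edges of $G_i\square G_j$ in the other coordinate direction. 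Conversely, running these correspondences backwards shows that every edge of each $G_i^{(2)}$ and each $G_i\square G_j$ is an edge of $G^{(2)}$, so no edges are lost.

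It remains to see that the subgraphs in the decomposition are pairwise edge-disjoint, i.e., that no edge of $G^{(2)}$ joins two vertices lying in different blocks of the partition; this is the one step that needs genuine care. One checks that the common vertex of adjacent $A,B$ pins down which two components are involved, and that whenever $A$ and $B$ sit in different blocks the two non-common vertices end up in different components, so that $A\triangle B$ cannot be an edge of $G$. Once this is established, comparing vertex sets, edge sets, and the identification $G_{ij}\simeq G_i\square G_j$ completes the proof.
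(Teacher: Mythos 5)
Your argument is correct. The paper itself states Proposition~\ref{componentes} without proof, as one of the ``known facts'' collected in the preliminaries, so there is no internal proof to compare against; your direct verification from the definition is exactly the standard argument one would give. Two small remarks. First, your third paragraph is already subsumed by your second: once you take an arbitrary edge $AB$ of $G^{(2)}$, write $A=\{x,z\}$, $B=\{y,z\}$ with $xy\in E(G)$, and observe that $x,y$ lie in a common component $G_i$, the case split on the location of $z$ shows that \emph{both} endpoints lie in the same block of your vertex partition (either both inside $\binom{V(G_i)}{2}$ or both in the block indexed by the unordered pair $\{i,j\}$), so no edge of $G^{(2)}$ ever crosses blocks and the ``needs genuine care'' step is already discharged; there is nothing further to check. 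Second, the hypothesis $|G_i|\geq 2$ plays no role in your argument beyond guaranteeing that each $G_i^{(2)}$ is nonempty, which is worth a one-line remark but is not a gap.
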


The following proposition appears in the proof of Lema 12 in \cite{token2}.
\begin{proposition}\label{pborrado}
Let $X$ be a subset of $V(G)$ and $G'=G-X$. Then $F_k(G')$ is isomorphic to the graph obtained from $F_k(G)$ by deleting all vertices in $F_k(G)$ such that have al least one element of $X$.  
\end{proposition}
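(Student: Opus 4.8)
The plan is to prove the statement by exhibiting an explicit isomorphism, and in fact the natural one: the identity map on $k$-subsets. Write $V'=V(G)\setminus X$, so that $G'=G-X$ is the subgraph of $G$ induced on $V'$, and hence $E(G')=\{uv\in E(G): u,v\in V'\}$. Let $H$ denote the graph obtained from $F_k(G)$ by deleting every vertex (every $k$-subset of $V(G)$) that meets $X$. The first step is to observe that $F_k(G')$ and $H$ have literally the same vertex set: a $k$-subset $A\subseteq V(G)$ survives the deletion in $H$ precisely when $A\cap X=\emptyset$, i.e. when $A\subseteq V'$, and these are exactly the $k$-subsets of $V'=V(G')$, which are the vertices of $F_k(G')$. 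I would therefore define $\phi\colon V(F_k(G'))\to V(H)$ to be the identity and argue that it is a graph isomorphism.

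The core step is to check that $\phi$ preserves and reflects adjacency. Let $A,B$ be two $k$-subsets of $V'$. Since $H$ is an induced subgraph of $F_k(G)$, the vertices $A$ and $B$ are adjacent in $H$ if and only if they are adjacent in $F_k(G)$, that is, if and only if $A\triangle B=\{a,b\}$ for some edge $ab\in E(G)$. On the other hand, $A$ and $B$ are adjacent in $F_k(G')$ if and only if $A\triangle B=\{a,b\}$ for some edge $ab\in E(G')$. The decisive observation is that whenever $A\triangle B=\{a,b\}$ we have $a,b\in A\cup B\subseteq V'$, because both $A$ and $B$ are contained in $V'$ by hypothesis. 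Thus the two endpoints of the symmetric difference automatically avoid $X$, and for such a pair of vertices the description of $E(G')$ above gives $ab\in E(G')$ if and only if $ab\in E(G)$. Chaining these equivalences shows that $A$ is adjacent to $B$ in $F_k(G')$ exactly when $A$ is adjacent to $B$ in $H$, so $\phi$ is an isomorphism.

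The point to be careful about, rather than a genuine obstacle, is the containment $A\triangle B\subseteq V'$: this is precisely what forces the vertex deletion $G'=G-X$ to retain exactly those edges that are relevant to adjacencies among the surviving tokens. Once this is isolated, the argument is purely set-theoretic bookkeeping and needs neither induction nor case analysis. In particular it is uniform in $k$ and in $X$, and it covers the degenerate situations $X=\emptyset$ (where $G'=G$ and $H=F_k(G)$) and $|V'|<k$ (where both $F_k(G')$ and $H$ are empty), so I would simply remark that these are subsumed by the general argument rather than treat them separately.
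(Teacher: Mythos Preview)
Your argument is correct. The paper itself does not supply a proof of this proposition at all: it simply records that the statement appears in the proof of Lemma~12 of \cite{token2} and uses it as a black box. What you have written is exactly the natural verification---identifying the surviving $k$-subsets with the $k$-subsets of $V(G)\setminus X$ and then observing that, for two such subsets, the symmetric difference lies entirely inside $V'$, so the edge test in $G$ and in $G'$ coincide. There is nothing to compare against in the paper, and your proof would serve perfectly well as the omitted justification.
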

In~\cite{alba} was proved that $\al(P_m^{(2)})=\floor{m^2/4}$, $m\geq 2$. This is sequence $A002620(n)$ in  The On-Line Encyclopedia of Integer Sequences (OEIS)~\cite{oeis}. 
\begin{proposition}\label{propi}
Let $a(n)=A002620(n)$, $n \geq 0$.
\begin{enumerate}
\item $a(n)=\floor{n/2}\ceil{n/2}=\floor{n^2/4}$. 
\item $a(n) = a(n-1)+\floor{n/2}=a(n-1) + \ceil{(n -1)/2}$, $n > 0$, $a(0)=a(1)=0$.
\item $ a(n) = a(n-2) + n-1$,  $a(0) = 1, a(1) = 0,$ $n \geq 2$.
\end{enumerate}
\end{proposition}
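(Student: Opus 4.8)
The final statement is Proposition 2.6 (labeled \texttt{propi}), which collects three standard identities and recurrences for the quarter-squares sequence $a(n) = A002620(n)$. My plan is to treat each of the three items in turn, since they are elementary arithmetic facts about floors and ceilings that can be verified by a clean parity argument rather than by appeal to any deeper combinatorics.

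For item (1), the plan is to prove $\floor{n/2}\ceil{n/2} = \floor{n^2/4}$ by splitting into the two parity cases. When $n = 2k$ is even, both $\floor{n/2}$ and $\ceil{n/2}$ equal $k$, so the product is $k^2$, while $\floor{n^2/4} = \floor{4k^2/4} = k^2$; the two sides agree. When $n = 2k+1$ is odd, $\floor{n/2} = k$ and $\ceil{n/2} = k+1$, so the product is $k(k+1) = k^2 + k$, while $n^2/4 = (4k^2 + 4k + 1)/4 = k^2 + k + 1/4$, whose floor is again $k^2 + k$. Thus both expressions equal $\floor{n^2/4}$ in either case.

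For item (2), I would establish the recurrence $a(n) = a(n-1) + \floor{n/2}$ together with the alternate form $a(n) = a(n-1) + \ceil{(n-1)/2}$, again by parity. The key observation is that $\floor{n/2} = \ceil{(n-1)/2}$ for every integer $n$, which immediately reconciles the two stated increments, so it suffices to verify one of them. Using the closed form from item (1), I would compute $a(n) - a(n-1) = \floor{n/2}\ceil{n/2} - \floor{(n-1)/2}\ceil{(n-1)/2}$ in each parity class and check that the difference equals $\floor{n/2}$. The base cases $a(0) = a(1) = 0$ follow directly from the formula. Item (3), the two-step recurrence $a(n) = a(n-2) + n - 1$ for $n \geq 2$, I would handle the same way: substitute the closed form and simplify $\floor{n^2/4} - \floor{(n-2)^2/4}$, noting that $n^2 - (n-2)^2 = 4n - 4 = 4(n-1)$ is divisible by $4$ so the floors subtract cleanly to give $n-1$.

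The whole statement is essentially bookkeeping, so I do not anticipate a genuine mathematical obstacle; the only point requiring care is the stated base value $a(0) = 1$ in item (3), which is inconsistent with the value $a(0) = 0$ forced by items (1) and (2) and by the definition via $\floor{n^2/4}$. I expect this is a typographical slip for $a(0) = 0$, and the cleanest route is to verify the recurrence from the closed form of item (1) directly, thereby sidestepping any reliance on a possibly mistyped seed value while flagging the discrepancy. Indeed, these identities are all recorded in the OEIS entry for $A002620$, so the proof can reasonably be presented as a short parity-based verification with a remark reconciling the base cases.
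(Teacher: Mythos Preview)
Your proposal is correct. The paper does not actually prove this proposition: it is stated without proof as a collection of known identities for the OEIS sequence $A002620$, and is used purely as a toolbox in later sections. Your parity-based verification is the standard elementary argument and works exactly as you outline; in particular, your observation that $n^2-(n-2)^2=4(n-1)$ together with $n$ and $n-2$ having the same parity cleanly yields item~(3). You are also right to flag the base value $a(0)=1$ in item~(3) as a typographical slip for $a(0)=0$, which is what items~(1) and~(2) and the OEIS entry give.
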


\section{Proof of Theorem~\ref{theorem01}}
In $F_{m,1}$, we consider that $V(P_m)=\{1, \dots, m\}$, $E(P_m)=\{\{i, i+1\} \colon 1\leq i \leq m-1\}$ and $V(K_1)=\{m+1\}$. We use some propositions in order to prove our main result in this section. 

If $T_{m}$ is the set of all $2$-subsets of $V(P_m)$ and $B=\{\{a, m+1\}~\colon a \in V(P_m)\}$, then $\{T_m, B\}$ is a partition of $V(F_{m, 1}^{(2)})$. Notice that the subgraph of  $F_{m, 1}^{(2)}$ induced by  $T_m$ is isomorphic to $P_m^{(2)}$ and the subgraph induced by $B$ is isomorphic to $P_m$. Sometimes we use $T_m$ and $B$ as set of vertices or as the corresponding induced  subgraph.   

For  $q \in \{1, \dots, m\}$, we define the following subsets of vertices of $F_{m,1}^{(2)}$
\[
R_{q}=\lbrace\lbrace q, i\rbrace \colon i\in\lbrace 1,\ldots,m\rbrace -\lbrace q\rbrace\rbrace.
\]
In fact, $R_q \subseteq T_m$, for every $q \in \{1, \dots, m\}$ (see Figure~\ref{abanico1} for an example).
\begin{figure}[h]
\begin{center}
\includegraphics[width=.40\textwidth]{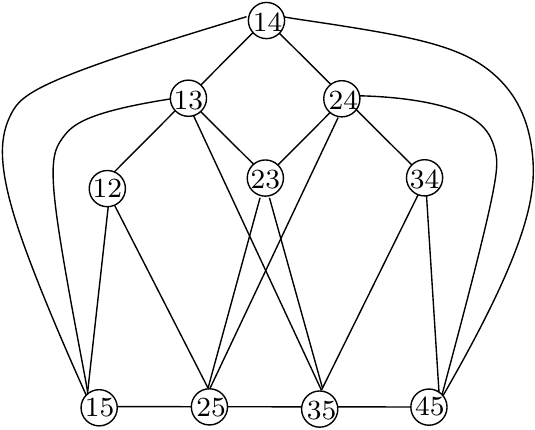}
\caption{Double vertex graph of $F_{4, 1}$. In this case $B=\{\{1, 5\}, \{2, 5\}, \{3, 5\}, \{4, 5\}\}, T_m=V(A_{4, 1}^{(2)})-B$ and $R_2=\{\{1,2\}, \{2, 3\},\{2, 4\}\}$.}
\label{abanico1}
\end{center}
\end{figure}
\begin{proposition}\label{propri}
Let $m \geq 4$ be an integer. Then $\al(T_m-R_i) = \al(T_{m-1})$, for all $i \in \{1, \dots, m\}$.
\end{proposition}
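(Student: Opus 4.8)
The plan is to recognize $T_m-R_i$ as the double vertex graph of a smaller graph and then reduce to the already-known independence numbers of path double vertex graphs and of grid graphs. Since $R_i$ is exactly the set of $2$-subsets of $V(P_m)$ that contain the vertex $i$, the induced subgraph $T_m-R_i$ of $P_m^{(2)}$ is obtained from $P_m^{(2)}$ by deleting every vertex that contains $i$; by Proposition~\ref{pborrado} (applied with $k=2$ and $X=\{i\}$) this graph is isomorphic to $(P_m-i)^{(2)}$. If $i\in\{1,m\}$ then $P_m-i\cong P_{m-1}$, hence $T_m-R_i\cong P_{m-1}^{(2)}=T_{m-1}$ and there is nothing to prove.

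For $1<i<m$ I would set $a=i-1$ and $b=m-i$, so that $a,b\geq 1$, $a+b=m-1$, and $P_m-i\cong P_a\cup P_b$. By Proposition~\ref{componentes} (together with the trivial observation that for a one-vertex component $P_1$ one has $P_1^{(2)}$ empty and $P_1\square P_b\cong P_b$, so the same decomposition still holds when $a$ or $b$ equals $1$), the graph $(P_a\cup P_b)^{(2)}$ is the disjoint union of $P_a^{(2)}$, $P_b^{(2)}$, and $P_a\square P_b$. Using the known value $\alpha(P_n^{(2)})=\lfloor n^2/4\rfloor$ from \cite{alba}, Proposition~\ref{ind-malla}, and the identity $n-\lceil n/2\rceil=\lfloor n/2\rfloor$, this gives
\[
\alpha(T_m-R_i)=\left\lfloor\frac{a^2}{4}\right\rfloor+\left\lfloor\frac{b^2}{4}\right\rfloor+\left\lceil\frac a2\right\rceil\left\lceil\frac b2\right\rceil+\left\lfloor\frac a2\right\rfloor\left\lfloor\frac b2\right\rfloor .
\]

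It then remains to check that this quantity equals $\lfloor(a+b)^2/4\rfloor=\alpha(T_{m-1})$. I would do this by rewriting $\lfloor n^2/4\rfloor=\lfloor n/2\rfloor\lceil n/2\rceil$ (Proposition~\ref{propi}(1)) and factoring the right-hand side above as
\[
\left(\left\lfloor\frac a2\right\rfloor+\left\lceil\frac b2\right\rceil\right)\left(\left\lceil\frac a2\right\rceil+\left\lfloor\frac b2\right\rfloor\right).
\]
The two factors have sum $a+b=m-1$ and differ by at most $1$ (their difference is $0$ when $a\equiv b\pmod 2$ and $\pm1$ otherwise), so their product is $\lfloor(m-1)/2\rfloor\lceil(m-1)/2\rceil=\lfloor(m-1)^2/4\rfloor=\alpha(T_{m-1})$, completing the proof.

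The structural reductions are routine; the only delicate points are the degenerate component $P_1$, which is not literally covered by Proposition~\ref{componentes} as stated and must be noted separately, and the parity bookkeeping in the final identity, which the factoring above is designed to make transparent. The hypothesis $m\geq 4$ ensures that in the interior case both pieces $P_a$ and $P_b$ are nonempty, so all the cited formulas apply.
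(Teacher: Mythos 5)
Your proposal is correct and follows essentially the same route as the paper: reduce via Proposition~\ref{pborrado} to the double vertex graph of $P_m-i$, decompose it via Proposition~\ref{componentes} into two path double vertex graphs plus a grid, and evaluate with $\alpha(P_n^{(2)})=\lfloor n^2/4\rfloor$ and Proposition~\ref{ind-malla}. The only (welcome) differences are that you merge the paper's boundary case $i\in\{2,m-1\}$ into the general case by handling the degenerate $P_1$ component explicitly, and you verify the final arithmetic identity transparently through the factorization $\bigl(\lfloor a/2\rfloor+\lceil b/2\rceil\bigr)\bigl(\lceil a/2\rceil+\lfloor b/2\rfloor\bigr)$, a computation the paper leaves unstated.
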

\begin{proof}
By proposition~\ref{pborrado}, $T_m-R_i$ is isomorphic  to the double vertex graph of $P_m-i$, for every $i \in \{1, \dots, m\}$. If $i \in \{1, m\}$, then $T_m-R_i$ is isomorphic to $T_{m-1}$ and hence $\al(T_m-R_i) = \al(T_{m-1})$. If $i \in \{2, m-1\}$, then $T_m-R_i$ consists of two components: one isomorphic to $P_{m-2}$ and the other isomorphic to $T_{m-2}$. Therefore
\[
\al(T_m -R_i)=\al(P_{m-2})+\al(T_{m-2})=\ceil{(m-2)/2}+\floor{(m-2)^2/4}=\floor{(m-1)^2/4}.
\]
Finally, if $i \in \{1, \dots, m\}-\{1, 2, m-1, m\}$, then $P_m-i$ consists of two components: $P_m-\{i, \dots, m\}$ and $P_m-\{1, \dots, i\}$. Then, by Proposition~\ref{componentes} it follows that the double vertex graph of $P_m-i$, that is isomorphic to $T_m-R_i$, has three components. The first one is the double vertex graph of $P_m-\{i, \dots, m\}$ and is isomorphic to $T_{i-1}$.  The second component is the double vertex graph of $P_m-\{1, \dots, i\}$ and is isomorphic to $T_{m-i}$. Finally, the third component is isomorphic to the grid graph $P_{m-i} \square P_{i-1}$. Therefore
\begin{eqnarray*}
\al(T_m -R_i)&=&\al(T_{m-i})+\al(T_{i-1})+\al(P_{m-i} \square P_{i-1}),
\end{eqnarray*}
and hence
\begin{eqnarray*}
 \lefteqn{ \al(T_m -R_i)=\floor{(m-i)^2/4}+\floor{(i-1)^2/4}+} \\ & &
\left\lceil \frac{m-i}{2}\right\rceil\left\lceil \frac{i-1}{2}\right\rceil+\left(m-i-\left\lceil \frac{m-i}{2}\right\rceil\right)\left(i-1-\left\lceil \frac{i-1}{2}\right\rceil \right)\\  &= &
\left\lfloor\frac{(m-1)^2}{4}\right\rfloor.
\end{eqnarray*}

\end{proof}
In Figure~\ref{t-borrado} (a) and (b) we show the graphs $T_8-R_2$ and $T_6-R_1$, respectively.
\begin{figure}[h]
\begin{center}
\includegraphics[width=1.08\textwidth]{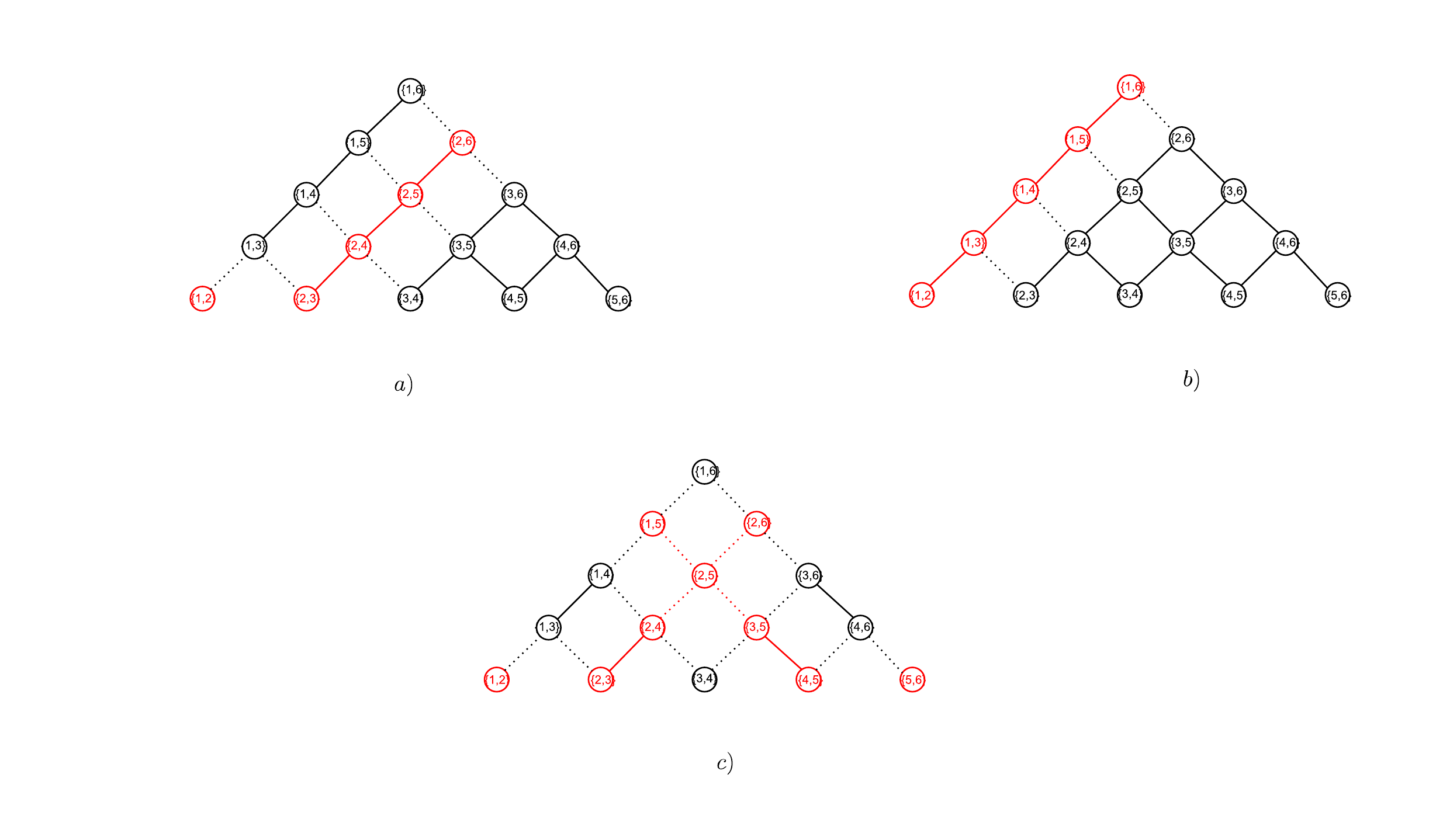}
\caption{a) Graph $T_8-R_2$, b) 
graph $T_6-R_1$, c) graph $T_6-(R_2\cup R_5)$.}
\label{t-borrado}
\end{center}
\end{figure}
\begin{proposition}\label{coroimport1}
Let $m \geq 4$ be an integer. Let $S$ be a nonempty subset of $\{1, \dots, m\}$ such that in $S$ there does not  exist consecutive integers. Then $\al(T_m-\cup_{i\in S}R_i) \leq \al(T_{m-1})$. Even more, if  $|S|\geq 2$, then $\al(T_m-\cup_{i\in S}R_i) < \al(T_{m-1})$.
\end{proposition}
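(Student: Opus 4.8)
The plan is to deduce both parts from Proposition~\ref{propri} together with monotonicity of $\al$ under induced subgraphs (Lemma~\ref{lemasubg}), using that the graphs involved are double vertex graphs of a path with a vertex set deleted. For the weak inequality, fix any $i\in S$: the vertex set of $T_m-\bigcup_{k\in S}R_k$, namely the $2$-subsets of $V(P_m)$ disjoint from $S$, is contained in the set of $2$-subsets disjoint from $\{i\}$, which is the vertex set of $T_m-R_i$; since both are induced subgraphs of $T_m$, the graph $T_m-\bigcup_{k\in S}R_k$ is an induced subgraph of $T_m-R_i$. Then Lemma~\ref{lemasubg} gives $\al\bigl(T_m-\bigcup_{k\in S}R_k\bigr)\le\al(T_m-R_i)$, and $\al(T_m-R_i)=\al(T_{m-1})$ by Proposition~\ref{propri}.

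For strictness when $|S|\ge2$, the same induced-subgraph argument reduces matters to $S=\{i,j\}$ with $i<j$, and since $S$ has no two consecutive elements, $j\ge i+2$. By Proposition~\ref{pborrado}, $T_m-(R_i\cup R_j)$ is isomorphic to $F_2(P_m-\{i,j\})$, and $P_m-\{i,j\}$ is the disjoint union of the (possibly empty) subpaths on $\{1,\dots,i-1\}$, $\{i+1,\dots,j-1\}$, $\{j+1,\dots,m\}$, of orders $a=i-1\ge0$, $b=j-i-1\ge1$, $c=m-j\ge0$, with $a+b+c=m-2$. Partitioning $V(F_2(\cdot))$ into the $2$-subsets lying inside one component and those meeting two components, exactly as in Proposition~\ref{componentes} and in the proof of Proposition~\ref{propri} (this remains valid when a component is trivial, with $P_0^{(2)}=P_1^{(2)}=\varnothing$ and $P_1\square P_k\simeq P_k$), and using $\al(P_a^{(2)})=\floor{a^2/4}$ and Proposition~\ref{ind-malla} (recall $r-\ceil{r/2}=\floor{r/2}$), I obtain
\[
\al\bigl(F_2(P_m-\{i,j\})\bigr)=\floor{a^2/4}+\floor{b^2/4}+\floor{c^2/4}+f(a,b)+f(a,c)+f(b,c),
\]
where $f(x,y)=\ceil{x/2}\ceil{y/2}+\floor{x/2}\floor{y/2}$ and every summand with a zero argument vanishes. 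It then remains to show this quantity is strictly less than $\floor{(m-1)^2/4}=\al(T_{m-1})$.

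To finish, I would substitute $a=2p+\e_a$, $b=2q+\e_b$, $c=2r+\e_c$ with $\e_a,\e_b,\e_c\in\{0,1\}$; a short expansion collapses the displayed sum to $P^2+PE+\binom{E}{2}$, where $P=p+q+r$ and $E=\e_a+\e_b+\e_c$, while $m-1=2P+E+1$. Comparing with $\floor{(2P+E+1)^2/4}$ in the four cases $E\in\{0,1,2,3\}$ shows the difference equals $P$ when $E=0$ and $P+1$ when $E\in\{1,2,3\}$, and since $m\ge4$ forces $2P+E=m-2\ge2$, this difference is at least $1$ in every case. I do not expect a genuine obstacle here: the one idea that does the work is the reduction to $|S|=2$ via Lemma~\ref{lemasubg}, after which everything is a bounded computation; the only places that need care are the bookkeeping for the degenerate components ($a=0$, $c=0$, or $b=1$) in the decomposition and the floor functions in the four-case check.
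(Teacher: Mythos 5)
Your argument is correct, and while the weak inequality is handled exactly as in the paper (reduce to one $R_i$ via Lemma~\ref{lemasubg} and apply Proposition~\ref{propri}), your proof of strictness takes a genuinely different route. The paper argues by contradiction with an exchange: assuming a maximum independent set $I$ of $T_m-(R_i\cup R_j)$ has size $\al(T_{m-1})$, it inserts the two vertices $\{j-1,j\},\{j,j+1\}$ (both forbidden by $R_j$ but available in $T_m-R_i$ since $i$ and $j$ are not consecutive) at the cost of removing at most the single vertex $\{j-1,j+1\}$, producing an independent set in $T_m-R_i$ larger than $\al(T_m-R_i)=\al(T_{m-1})$, a contradiction. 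You instead compute $\al(T_m-(R_i\cup R_j))$ exactly: after the reduction to $|S|=2$, you decompose $F_2(P_m-\{i,j\})$ into the (up to six) components coming from the three subpaths of orders $a,b,c$ and their pairwise grids, sum the known independence numbers, and verify via the parity bookkeeping $a=2p+\e_a$, etc., that the total $P^2+PE+\binom{E}{2}$ falls short of $\floor{(m-1)^2/4}$ by $P$ or $P+1$, which is at least $1$ since $2P+E=m-2\geq 2$ (and $b\geq 1$ rules out the degenerate $E=0$, $P=0$ situation anyway). I checked the collapse of the sum and the four cases $E\in\{0,1,2,3\}$; they are right, and your conventions for trivial components ($a=0$, $c=0$, $b=1$) are consistent with Propositions~\ref{componentes} and~\ref{ind-malla}. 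What your approach buys is stronger, quantitative information (the exact value of $\al(T_m-(R_i\cup R_j))$ and an explicit gap), at the price of more careful case and floor arithmetic; the paper's exchange argument is shorter and avoids all computation, but only certifies the strict inequality. Either proof suffices for how the proposition is used in Theorems~\ref{theorem01} and~\ref{theorem02}.
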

\begin{proof}
If $|S|=1$ the result follows from Proposition~\ref{propri}. If $|S|\geq 2$ then $T_m-\cup_{i\in S}R_i$ is an induced subgraph of $T_m-R_x$, for every $x \in S$ and by Lemma~\ref{lemasubg} it follows that $\al(T_m-\cup_{i\in S}R_i) \leq \al(T_{m-1})$. In the view of Lemma~\ref{lemasubg}, it is enough to prove the second part of the affirmation for  $|S|=2$. The proof is by contradiction. Let $S=\{i, j\}$, with $1\leq i <j$.  Suppose that $\al(T_m-R_i \cup R_j) = \al(T_{m-1})$. Let $I$ be an independent set in $T_m-R_i \cup R_j$ of cardinality $\al(T_{m-1})$. We have two cases.

{\it Case 1.} If $j=m$, then $\{m-1, m\} \in R_j$, and hence $\{m-1, m\} \not\in I$. Also $\{m-1, m\} \not\in R_i$ because $i \neq j-1$, by hypothesis. Therefore the set $I'=I \cup\{\{m-1, m\}\}$ will be another independent set of $T_m-R_i$ of cardinality greater than  $\al(T_{m-1})$, a contradiction. 

{\it Caso 2.} $1<j<m$. Let  $X=\{j-1, j\}$ and $Z=\{j, j+1\}$. As the vertices $X$ and $Z$ belong to $R_j$, then  both vertices does not belong to $I$. The open neighborhood of $\{X, Z\}$ is a subset of $\{\{j-2, j\}, \{j-1, j+1\}, \{j, j+2\}\}$. Of this vertices, only $\{j-1, j+1\}$ could be in $I$. Therefore, the set  $I'=(I-\{ \{j-1, j+1\}\})\cup \{X, Z\}$ is and independent set in $T_m-R_i$ (because $i \not \in \{j-1, j+1\}$) of cardinality greater than $\al(T_{m-1})$, a contradiction. 
\end{proof}
In Figure~\ref{t-borrado} (c) we show the graph $T_6-(R_2\cup R_5)$.


 It is clear that $\alpha(F_{1,1}^{(2)})=1$. We are ready to prove our result. 
\begin{customthm}{\ref{theorem01}}\label{eight}
Let $m \geq 2$ be an integer. Then 
\[ 
\alpha(F_{m,1}^{(2)})= \lfloor m^{2}/4\rfloor
\]
\end{customthm}

\begin{proof}
The case $m=3$ can be checked by hand or by computer. We suppose that $m\geq 4$. As $T_m$ is isomorphic to $P_{m}^{(2)}$ it follows from Lemma \ref{lemasubg} that $\alpha(P_{m}^{(2)}) \leq \alpha(F_{m,1}^{(2)})$. We will show that $\alpha(F_{m,1}^{(2)})\leq\alpha(P_{m}^{(2)})$. 
We use the fact that the subgraph of $F_{m,1}^{(2)}$ induced by the vertex set $B$ is isomorphic to $P_m$ and $\alpha(P_m)= \lceil m/2\rceil$. 

Let $I$ be an independent set in $F_{m,1}^{(2)}$. We have the following cases: 

{\it Case 1.} If $I\subseteq T_{m}$, then $| I |\leq \alpha(P_{m}^{(2)})$. 

{\it Case 2.} If $I\subseteq B$, then $| I | \leq \lceil m/2\rceil\leq\lfloor m^{2}/4\rfloor$ because $m\geq 4$. 

{\it Case 3.} $I=I^{'}\cup I^{''}$, with $I^{'}\subset T_{m}, I^{''}\subset B$, and $I^{'}, I^{''}$ both non-empty.
Let $S=\{i \colon \{i, m+1\} \in I''\}$. As $I''$ is an independent set, in $S$ there does not exists any two consecutive integers. We claim that $\cup_{i \in S}R_i \cap I'=\emptyset$. Indeed,  suppose that $\{x, y \} \in \cup_{i \in S}R_i \cap I'$. Then $\{x, y\} \in R_i$, for some $i \in S$. Without lost of generality we can suppose that $\{x, y\}=\{i, y\}$, with $y \in \{1, \dots, m\}-\{i\}$. By definition of $S$, vertex $\{i, m+1\}$ belong to $I''$.  Now $\{i, y\} \triangle \{i, m+1\}=\{y, m+1\}$ that is an edge in $F_{m,1}$, and hence $\{i, y\} \sim  \{i, m+1\}$ in $F_{m,1}^{(2)}$. But this is a contradiction, since $I$ is an independent set. This shows that $I'$ is a subset of $T_m-\cup_{i \in S}R_i$ and hence 
\begin{equation}\label{ecuacion1}
 |I'|\leq \al(T_m-\cup_{i \in S}R_i).
\end{equation}
Now we show that $|I|\leq  \lfloor m^{2}/4\rfloor$. 
 If $|I''|=1$, then $|S|=1$ and by Proposition~\ref{propri}
\[
|I| =  \al(T_{m-1})+1\leq \al(T_{m-1})+\lfloor m/2\rfloor= \lfloor m^{2}/4\rfloor,
\]
where the last inequality follows from Proposition~\ref{propi}(2). Finally, consider that $2\leq |I^{''}|\leq \lceil m/2\rceil$. As $|I''|\geq 2$, then $|S|\geq 2$. By Proposition~\ref{coroimport1} and Equation~(\ref{ecuacion1}) we have that  que

\begin{eqnarray*}
\mid I\mid &\leq& \alpha(T_m-U_{i \in S}R_i)+|I''| \\
&<& \alpha(T_{m-1})+\lceil m/2 \rceil\\
&\leq&\alpha(T_{m-1})+\lceil m/2 \rceil-1\\
&\leq&\alpha(T_{m-1})+\lfloor m/2 \rfloor\\
&\leq&\lfloor m^{2}/4\rfloor.
\end{eqnarray*}

\end{proof}
\section{Proof of Theorem~\ref{theorem02}}

For the wheel graph $W_{m, 1}$ we consider that $V(C_m)=\{1, \dots, m\}$, $E(C_m)=\{\{i, i+1\} \colon 1\leq i \leq m-1\} \cup \{\{1, m\}\}$ and $V(K_1)=\{m+1\}$. Let $H_m$ denote the subgraph of $W_{m,1}^{(2)}$ induced by all $2$-subsets of $V(C_m)$. Let $D$ denote  the subgraph of $W_{m,1}^{(2)}$  induced by the vertex set $\{\{i, m+1\}\colon i \in \{1, \dots, m\}\}$. The graph $H_m$ is isomorphic to $C_m^{(2)}$ and $D$ is isomorphic to $C_m$. We also use $H_m$ and $D$ as vertex sets.  It is well-known that $\al(C_m)=\floor{m/2}$ and in \cite{alba} was proved that $\al(C_m^{(2)})=\floor{m\floor{m/2}/2}$, $m \geq 3$.

It can be checked by computer that $\al(W_{3,1}^{(2)})=2$. We now prove our main result in this section.
\begin{customthm}{\ref{theorem02}}\label{eight}
Let $m \geq 4$ be and integer. Then 
\[\al(W_{m,1}^{(2)})=\al(C_m^{(2)}).
\]
\end{customthm}
\begin{proof}
As $H_m$ is isomorphic to $C_m^{(2)}$, then every independent set $I$ in $H_m$ satisfies  
$|I| \leq \al(C_m^{(2)})\leq \al(W_{m,1}^{(2)})$. We will show that $\al(W_{m,1}^{(2)})\leq  \al(C_m^{(2)})$ .

Let $I$ be an independent set in $W_{m,1}^{(2)}$. If $I \subset H_m$, then $|I|\leq \al(C_m^{(2)})$. If $I\subset D$, then 
\[
|I| \leq \floor{m/2} \leq \floor{m\floor{m/2}/2}=\al(C_m^{(2)}).
\]

Now suppose that $I=I'\cup I''$, where $I'\subset H_m$, $I''\subset D$, and with $I', I''$ both non- empty sets. As $D$ is isomorphic to $C_m$ then $|I''| \leq \floor{m/2}$. For $q \in \{1, \dots, m\}$, let $R_q$ defined as in previous section. Let 
 \[
 U=\{i \in \{1, \dots, m\}\colon \{i, m+1\} \in I''\}.
 \]
 In a similar way that in the proof of Theorem~\ref{theorem01}, it can be showed that $\cup_{i \in U}R_i \cap I'=\emptyset$.  

Therefore we have $|I'|\leq \al(H_m-\cup_{i \in U}R_i)$.

 By Proposition~\ref{pborrado}, the graph $H_m-R_q$ is isomorphic to the double vertex graph of $C_m-q$, for every $q \in \{1, \dots, m\}$. But as $C_m-q$ is isomorphic to $P_{m-1}$ then $H_m-R_q \simeq T_{m-1}$.
 We like to bound $|I|$ using that $|I'| \leq \al(H_m-\cup_{i \in U}R_i)$. First, consider that $U=\{x\}$, for some $x \in \{1, \dots, m\}$, that is $I''=\{\{x, m+1\}\}$. By the previous paragraphs we have that 
 \[
|I| \leq \al(H_m-R_x)+1=\al(T_{m-1})+1= \floor{(m-1)^2/4}+1\leq  \floor{m\floor{m/2}/2},
\]
where the last inequality holds because $m\geq 4$.

 Now, suppose that $|U|\geq 2$. First note that if $q \in V(C_m)$, then the double vertex graph of $W_{m, 1}-q$ is isomorphic to the double vertex graph of $F_{m-1,1}$. Therefore, by Proposition~\ref{pborrado} it follows that 
\begin{equation}\label{iso2}
 W_{m, 1}^{(2)}-(R_q \cup \{\{q, m+1\}\} \simeq (W_{m, 1}-q)^{(2)} \simeq F_{m-1,1}^{(2)}.
\end{equation}
We like to obtain $\alpha(H_m - \cup_{i \in U}R_i)$. By Equation~(\ref{iso2}), after we delete one set $R_q$ and the vertex $\{q, m+1\}$ from $W_{m, 1}^{(2)}$, for $q \in U$, we obtain an isomorphic copy of $F_{m-1,1}^{(2)}$, which in turn contains isomorphic copies of the remaining sets $R_i$, for $i \in U -\{q\}$. Then we are in Case (3) of the proof of Theorem~\ref{theorem01} for $F_{m-1,1}^{(2)}$ and $S=U-\{x\}$ (with the corresponding relabeling given by the isomorphism between  $(W_{m, 1}-q)^{(2)}$ and $F_{m-1,1}$). Therefore, for any $x \in U$
 \[
 H_m-\cup_{i \in U}R_i=(H_m-R_x)-\cup_{i \in U-\{x\}}R_i\simeq T_{m-1}-\cup_{i \in S}R_i,
 \]
Using Proposition~\ref{coroimport1} for $F_{m-1,1}^{(2)}$ and $S$ we have that  
\[
\al(H_m-\cup_{i \in U}R_i)=\al(T_{m-1}-\cup_{i \in S}R_i)\leq \al(T_{m-2}).
\]

And hence

\begin{eqnarray*}
|I| &\leq& \al(H_m-\cup_{i \in U}R_i)+|I''|\\
&\leq & \al(T_{m-2})+\floor{m/2}\\
&\leq& \floor{(m-2)^2/4}+\floor{m/2}\\
&\leq& \floor{m\floor{m/2}/2}\\
&=&\al(C_m^{(2)}).
\end{eqnarray*}
\end{proof}

\section{Proof of Theorem~\ref{theorem03}}

The proof of Theorem~\ref{theorem03} follows directly from the following result.  
\begin{theorem}\label{iso-cyd}
For any non negative integer $n\geq 3$ we have  
\[C(P_{n})\simeq P_{n+1}^{(2)}\]
\end{theorem}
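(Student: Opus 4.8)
The plan is to exhibit an explicit bijection between $V(C(P_n))$ and $V(P_{n+1}^{(2)})$ and show it preserves adjacency in both directions. Write $V(P_n)=\{1,\dots,n\}$ with edges $\{i,i+1\}$, and $V(P_{n+1})=\{1,\dots,n+1\}$ with edges $\{i,i+1\}$. The vertices of $C(P_n)$ are the $2$-multisets of $\{1,\dots,n\}$, i.e. the sets $\{x,y\}$ with $x\le y$, while the vertices of $P_{n+1}^{(2)}$ are the genuine $2$-subsets of $\{1,\dots,n+1\}$. The natural map $\varphi$ I would use sends a multiset $\{x,y\}$ with $x<y$ to $\{x,y\}$ unchanged (viewed inside $\{1,\dots,n+1\}$), and sends a "doubled" element $\{x,x\}$ to $\{x,x+1\}$ — equivalently, $\varphi(\{x,y\}) = \{x,\, y+1\}$ whenever we agree to shift the larger coordinate up by one exactly when $x=y$; more cleanly, list the multiset in nondecreasing order as $(a,b)$ with $a\le b$ and set $\varphi(\{a,b\})=\{a,\,b+[a=b]\}$. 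I would first check this is a well-defined bijection onto the $2$-subsets of $\{1,\dots,n+1\}$: injectivity is immediate from the two cases, and a counting/surjectivity check works since a $2$-subset $\{c,d\}$ of $\{1,\dots,n+1\}$ with $c<d$ is hit by $\{c,d\}$ if $d\le n$ or $d>c+1$, and by the doubled multiset $\{c,c\}$ if $d=c+1$ — one should double-check the edge cases to be sure every subset has a unique preimage.

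Next I would verify adjacency is preserved. Recall two vertices of $C(P_n)$ are adjacent iff their intersection (as multisets) is a single element $\{a\}$ and, in the case $x=u=a$, the other two entries $y,v$ are adjacent in $P_n$. It is cleanest to split into the three shapes of an edge of $C(P_n)$: (i) $\{a,y\}\sim\{a,v\}$ with $a,y,v$ distinct and $\{y,v\}\in E(P_n)$; (ii) $\{a,a\}\sim\{a,v\}$ with $v\ne a$ and $\{a,v\}\in E(P_n)$ (here the shared element is $a$, with $x=u=a$ forced, so we need $y=a\sim v$); (iii) $\{a,y\}\sim\{a,y\}$-type degenerate cases do not arise since the two multisets must be distinct. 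Under $\varphi$, in case (i) both endpoints stay put (none is doubled since $y\ne v$ and they are adjacent, but one of $a,y,v$ could still equal another's — no: in (i) all three are distinct), giving $\{a,y\}\sim\{a,v\}$ in $P_{n+1}^{(2)}$ because $\{y,v\}$ is still an edge of $P_{n+1}$ and symmetric difference is exactly $\{y,v\}$. In case (ii), say $v=a+1$ or $v=a-1$; then $\varphi(\{a,a\})=\{a,a+1\}$ and $\varphi(\{a,v\})=\{a,v\}$ (not doubled), and one checks the symmetric difference $\{a,a+1\}\triangle\{a,v\}$ is an edge of $P_{n+1}$ in each subcase — e.g. $v=a+1$ gives $\varphi(\{a,v\})=\{a,a+1\}$, which would collide, so actually when $v=a+1$ the two multisets $\{a,a\}$ and $\{a,a+1\}$ map to... this needs care, and is exactly where I expect subtlety.

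The reverse direction is symmetric: take an edge $\{c,d\}\sim\{c,e\}$ of $P_{n+1}^{(2)}$ with $\{d,e\}\in E(P_{n+1})$, pull back via $\varphi^{-1}$, and track whether either endpoint was the image of a doubled multiset (which happens precisely when the pair has the form $\{c,c+1\}$ with that being the "$\varphi$ of $\{c,c\}$" preimage rather than the "$\varphi$ of $\{c,c+1\}$" preimage — but $\{c,c+1\}$ as a subset of $\{1,\dots,n+1\}$ has a unique preimage, so this ambiguity is only apparent and resolved by the surjectivity bookkeeping above). Then confirm the pulled-back pair satisfies the $C(P_n)$ adjacency rule, again case by case.

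The main obstacle, and the step I would spend the most care on, is the interplay at "doubled" vertices: the multiset $\{a,a\}$ and the multiset $\{a,a+1\}$ could both naively want to map near $\{a,a+1\}$, so I must be certain the shift $b\mapsto b+[a=b]$ genuinely untangles them and that no edge is created or destroyed at these boundary configurations (in particular near the ends $a=1$ and $a=n$, where $C(P_n)$'s rule about $y\sim v$ in $P_n$ versus $P_{n+1}$ could differ because $P_{n+1}$ has the extra vertex $n+1$ but no edge $\{n,n+1\}$ is "new" in a way that matters — wait, it is new). I would therefore double-check explicitly that the only edges of $P_{n+1}$ incident to $n+1$, namely $\{n,n+1\}$, correspond under $\varphi^{-1}$ exactly to the edges of $C(P_n)$ involving the doubled vertex $\{n,n\}$, and more generally that each edge $\{a,a+1\}$ of $P_{n+1}$ is "used up" consistently. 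Once the doubled-vertex bookkeeping is pinned down, the rest is a routine verification, and Theorem~\ref{theorem03} then follows since $\alpha$ is an isomorphism invariant and $\alpha(P_{n+1}^{(2)})=\floor{(n+1)^2/4}$ was recorded before Proposition~\ref{propi}.
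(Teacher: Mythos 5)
Your overall strategy (write down an explicit bijection on vertices and check adjacency both ways) is the same as the paper's, but the specific map you chose does not work, and the trouble you flagged in case (ii) is not a bookkeeping subtlety to be resolved --- it is fatal. The map $\varphi(\{a,b\})=\{a,\,b+[a=b]\}$ is not injective: for every $a\leq n-1$ you have $\varphi(\{a,a\})=\{a,a+1\}=\varphi(\{a,a+1\})$, so two distinct vertices of $C(P_n)$ (which are moreover adjacent, since $\{a,a\}$ and $\{a,a+1\}$ share $a$ and $a\sim a+1$ in $P_n$) are sent to the same vertex of $P_{n+1}^{(2)}$. Correspondingly $\varphi$ is not surjective either: the $2$-subsets $\{c,n+1\}$ with $c<n$ are never attained, since only $\{n,n\}$ is sent to a pair containing $n+1$. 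Your own surjectivity check already exhibits the double preimage of $\{c,c+1\}$; that cannot be repaired by any relabeling at the boundary, because no well-defined graph map may identify two adjacent vertices, let alone an isomorphism. So the argument has a genuine gap at its foundation, and the adjacency verification built on $\varphi$ cannot be completed.

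The fix is to shift the larger coordinate by one \emph{always}, not only on doubled pairs: writing each multiset in nondecreasing order, the paper uses $\phi(\{i,j\})=\{i,j+1\}$ for $i\leq j$. This is a genuine bijection from the $2$-multisets of $\{1,\dots,n\}$ onto the $2$-subsets of $\{1,\dots,n+1\}$ (in sorted coordinates it is a translation of the second coordinate, carrying the region $1\leq i\leq j\leq n$ onto $1\leq i<j\leq n+1$, with inverse $\{c,d\}\mapsto\{c,d-1\}$ for $c<d$). Adjacency in $C(P_n)$ in sorted coordinates means changing exactly one coordinate by exactly $1$ (this covers your cases (i) and (ii) uniformly, including $\{a,a\}\sim\{a,a\pm 1\}$), and adjacency in $P_{n+1}^{(2)}$ has the identical description on the shifted region; since a translation preserves ``one coordinate changes by $1$,'' adjacency is preserved in both directions. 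If you replace your $\varphi$ by this $\phi$, the case analysis you outlined becomes the routine verification you intended, and the application to Theorem~\ref{theorem03} via $\alpha(P_{n+1}^{(2)})=\lfloor (n+1)^2/4\rfloor$ is then correct.
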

\begin{proof}
 Without loss of generality we can suppose that for $\{a, b\} \in V(C(P_n))$, $a\leq b$, and for $\{a, b\}$ in $P_m^{(2)}$, $a<b$. Let $\phi\colon C(P_{n})\to P_{n+1}^{(2)}$ be the function given by $\phi(\{i,j\})=\{i,j+1\}$. It is an exercise to show that this function is a graph isomorphism between $C(P_{n})$ and $P_{n+1}^{(2)}$. 
 \end{proof}
 
\section{Proof of Theorem~\ref{theorem04}}
The vertex set of $C(F_{m, 1})$ can be partitioned in  $\{T_{m+1}, B\}$ where $T_{m+1}$ (as induced graph) is isomorphic to $C(P_m)$ (that is isomorphic to $P_{m+1}^{(2)}$ by Theorem~\ref{iso-cyd}) and 
\[
B=\{\{i, m+1\} : 1\leq i \leq m+1\}.
\]

Notice that the subgraph of $C(F_{m, 1})$ induced by $B$ is isomorphic to $F_{m, 1}$.

We define the following subset of vertices of  $C(F_{m,1})$.
\[
R_{i}=\lbrace\lbrace i, j\rbrace \colon j\in\lbrace 1,\ldots,m\rbrace \rbrace,
\]

The following proposition will be useful in the proof of Theorem~\ref{theorem04}
\begin{proposition}\label{procompleta}
For $m \geq 2$, we have that  $\al(T_{m+1}-R_i) \leq \floor{m^2/4}+1$, for any $i \in \{1, \dots, m\}$.
\end{proposition}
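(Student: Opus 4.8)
The plan is to identify $T_{m+1}-R_i$ with the pair graph of a subforest of $P_m$ and then split it into pieces whose independence numbers are already known.

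First I would record the pair-graph analogue of Proposition~\ref{pborrado}: deleting from $C(G)$ every $2$-multiset that contains a fixed vertex $x$ leaves exactly the induced subgraph $C(G-x)$ (immediate, since adjacencies among multisets avoiding $x$ are unaffected by removing $x$). Applying this with $G=P_m$ and $x=i$, and recalling that $T_{m+1}$ (as induced subgraph of $C(F_{m,1})$) is isomorphic to $C(P_m)$ while $R_i$ is precisely the set of $2$-multisets of $\{1,\dots,m\}$ containing $i$, gives $T_{m+1}-R_i\simeq C(P_m-i)$. If $i\in\{1,m\}$ then $P_m-i\simeq P_{m-1}$, hence $T_{m+1}-R_i\simeq C(P_{m-1})\simeq P_m^{(2)}$ by Theorem~\ref{iso-cyd} (with $m=2,3$ checked directly, since there the hypothesis $n\ge 3$ of Theorem~\ref{iso-cyd} fails), and then $\al(T_{m+1}-R_i)=\floor{m^2/4}\le\floor{m^2/4}+1$.

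Next, for $2\le i\le m-1$ the graph $P_m-i$ is the disjoint union $P_{i-1}\cup P_{m-i}$ on the vertex sets $\{1,\dots,i-1\}$ and $\{i+1,\dots,m\}$. The pair-graph analogue of Proposition~\ref{componentes}, proved the same way by checking that there are no edges between the three families of $2$-multisets (both elements in the first part, both in the second, one in each), yields
\[
C(P_{i-1}\cup P_{m-i})=C(P_{i-1})\ \cup\ C(P_{m-i})\ \cup\ (P_{i-1}\square P_{m-i}),
\]
a disjoint union, so $\al(T_{m+1}-R_i)=\al(C(P_{i-1}))+\al(C(P_{m-i}))+\al(P_{i-1}\square P_{m-i})$. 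Substituting $\al(C(P_k))=\floor{(k+1)^2/4}$ (from Theorem~\ref{iso-cyd} together with $\al(P_n^{(2)})=\floor{n^2/4}$, the cases $k\le 2$ being trivial) and the value of $\al(P_{i-1}\square P_{m-i})$ from Proposition~\ref{ind-malla}, and writing $p=i-1$, $q=m-i$ (so $p,q\ge 1$ and $p+q=m-1$), the statement becomes the inequality
\[
\floor{(p+1)^2/4}+\floor{(q+1)^2/4}+\ceil{p/2}\ceil{q/2}+\floor{p/2}\floor{q/2}\le\floor{(p+q+1)^2/4}+1.
\]

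The last, and only computational, step is to verify this inequality by cases on the parities of $p$ and $q$: writing each of $p,q$ as $2a$ or $2a-1$, the left-hand side collapses to $(a+b)^2-(a+b)+1$ when both are odd, to $(a+b)^2+(a+b)$ when both are even, and to $(a+b)^2$ in the mixed case, which in every case is at most $\floor{(p+q+1)^2/4}+1$ — with equality precisely when $p$ and $q$ are both odd (equivalently $m$ odd). I expect this parity bookkeeping, not any structural point, to be where the actual work lies; the pair-graph versions of Propositions~\ref{pborrado} and~\ref{componentes} are routine, but one must not forget to dispatch the small boundary values $m=2,3$ by hand.
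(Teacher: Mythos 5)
Your proposal is correct and follows essentially the same route as the paper: identify $T_{m+1}-R_i$ with $C(P_m-i)$, split it into the two pair-graph components plus the grid $P_{i-1}\square P_{m-i}$, and bound the sum of their independence numbers by $\floor{m^2/4}+1$. The only differences are cosmetic — you fold the paper's separate case $i\in\{2,m-1\}$ (where one component degenerates to $K_1$ and the grid to a path) into the general formula via $\al(C(P_1))=1$, and you carry out the parity computation that the paper merely asserts; your side remark that equality holds ``equivalently $m$ odd'' is slightly off (both $p,q$ even also gives $m$ odd but strict inequality), but this does not affect the bound.
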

\begin{proof}
Notice that for $i \in \{1, \dots, m\}$, the graph $T_{m+1}-R_i$ is isomorphic to the graph $C(P_m-i)$. We have several cases.

{\it Case 1.} If $i \in \{1, m\}$, then the graph $T_{m+1}-R_i$ is isomorphic to $C(P_{m-1})$, that it is isomorphic to $P_m^{(2)}$ (by Theorem~\ref{iso-cyd}), and hence $\al(T_{m+1}-R_i) =\floor{m^2/4}$. 

{\it Case 2.} If $i \in \{2, m-1\}$, then $T_{m+1}-R_i$ consists of three components as follows. One component that is isomorphic to $K_1$. Such component $K_1$ is either the vertex $\{1, 1\}$, or the vertex $\{m, m\}$ if $i=2$ or $i=m-1$, respectively. Another component consists either, in the subgraph generated by the vertices $R_1-\{\{1, 2\}, \{1, 1\}\}$, when  $i=2$, or $R_m-\{\{m-1, m\}, \{m, m\}\}$ when $i=m-1$. This component is isomorphic to $P_{m-2}$.  The last component is isomorphic to $T_{m-1}$: when $i=2$, $T_{m-1}$ will be the subgraph generated by the set of vertices  $T_{m+1}-(R_1 \cup R_2)$, and when $i=m-1$, $T_{m-1}$ will be the subgraph generated by the set of vertices $T_{m+1}-(R_{m-1} \cup R_m)$. Then
\begin{eqnarray*}
\al(T_{m+1} -R_i)&=&\al(T_{m-1})+\al(P_{m-2})+1\\
&=&\floor{(m-1)^2/4}+\ceil{(m-2)/2}+1\\
&\leq&\floor{(m-1)^2/4}+\ceil{(m-1)/2}+1\\
&\leq& \floor{m^2/4}+1,
\end{eqnarray*}
where, for the last inequality, we use the fact that $\ceil{(m-2)/2} \leq \ceil{(m-1)/2}$ and part 2 of Proposition~\ref{propi}. 

{\it Case 3.} If $i \in \{1, \dots, m\}-\{1, 2, m-1, m\}$, then $T_{m+1}-R_i$ consists of three components that came from the double vertex graph of $P_m-i$. The first component is isomorphic to $C(P_m-\{1, \dots, i\})$, that in fact  is isomorphic  to $T_{m-i+1}$. The other component is isomorphic to $C(P_m-\{i, \dots, m\})$ that is isomorphic to $C(P_{i-1})$, which in turn is isomorphic to $T_{i}$. The last component of $T_{m+1}-R_i$ is the subgraph of  $T_{m+1}$ induced for the set of vertices of the form $\{a, b\}$ with $a \in \{1, \dots, i-1\}$ and $b \in \{i+1, \dots, m\}$. This last component is isomorphic to the grid graph $P_{m-i} \times P_{i-1}$. Therefore
\begin{eqnarray*}
\al(T_{m+1} -R_i)&=&\al(T_{m-i+1})+\al(T_{i})+\al(P_{m-i} \times P_{i-1}).
\end{eqnarray*}
Therefore
\begin{eqnarray*}
 \lefteqn{ \al(T_{m+1} -R_i)=\floor{(m-i+1)^2/4}+\floor{i^2/4}+} \\ & &
\left\lceil \frac{m-i}{2}\right\rceil\left\lceil \frac{i-1}{2}\right\rceil+\left(m-i-\left\lceil \frac{m-i}{2}\right\rceil\right)\left(i-1-\left\lceil \frac{i-1}{2}\right\rceil \right)\\  
&\leq & \left\lfloor\frac{m^2}{4}\right\rfloor+1.
\end{eqnarray*}
\end{proof}

\begin{corollary}\label{coroimport2}
$\al(T_{m+1}-\cup_{i\in S}R_i) \leq \floor{m^2/4}+1$, for every $S \subseteq \{1, \dots, m\}$, $S \neq \emptyset$. 
\end{corollary}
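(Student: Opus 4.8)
The plan is to reduce immediately to the single-set case already handled by Proposition~\ref{procompleta}, using only the monotonicity of the independence number under passing to induced subgraphs. Concretely, fix a nonempty $S \subseteq \{1, \dots, m\}$ and pick any $x \in S$. Deleting the vertex set $\cup_{i \in S} R_i$ from $T_{m+1}$ can be carried out by first deleting $R_x$ and then deleting the remaining vertices $\cup_{i \in S - \{x\}} R_i$; hence $T_{m+1} - \cup_{i \in S} R_i$ is an induced subgraph of $T_{m+1} - R_x$.

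Next I would invoke Lemma~\ref{lemasubg}, which gives $\al\!\left(T_{m+1} - \cup_{i \in S} R_i\right) \leq \al\!\left(T_{m+1} - R_x\right)$, and then apply Proposition~\ref{procompleta} to the right-hand side, which bounds it by $\floor{m^2/4}+1$. Chaining the two inequalities yields the claim. If $|S| = 1$ this is just Proposition~\ref{procompleta} restated, so no separate argument is needed; the argument above in fact covers all cases uniformly.

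I do not expect any genuine obstacle here: in contrast with Proposition~\ref{coroimport1}, where a strict inequality had to be extracted for $|S| \geq 2$ by an explicit "swap" argument exhibiting a larger independent set, here we only need the (non-strict) bound $\floor{m^2/4}+1$, so pure monotonicity suffices and all the combinatorial work has already been done in Proposition~\ref{procompleta}. The only point to state carefully is that removing vertices from a graph never increases the independence number, which is exactly Lemma~\ref{lemasubg}.
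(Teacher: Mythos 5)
Your argument is exactly the paper's proof: the graph $T_{m+1}-\cup_{i\in S}R_i$ is an induced subgraph of $T_{m+1}-R_x$ for any $x\in S$, so Lemma~\ref{lemasubg} combined with Proposition~\ref{procompleta} gives the bound. Correct, and no difference in approach worth noting.
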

\begin{proof}
For every $j \in S$, $T_{m+1}-\cup_{i\in S}R_i$ is an induced subgraph of $T_{m+1}-R_j$ and the result follows by Lemma~\ref{lemasubg} and by Proposition~\ref{procompleta}.
\end{proof}
In Figure~\ref{completa2} we show graph $T_{9+1} -R_2 \cup R_5$.
\begin{figure}[h]
\begin{center}
\includegraphics[width=.60\textwidth]{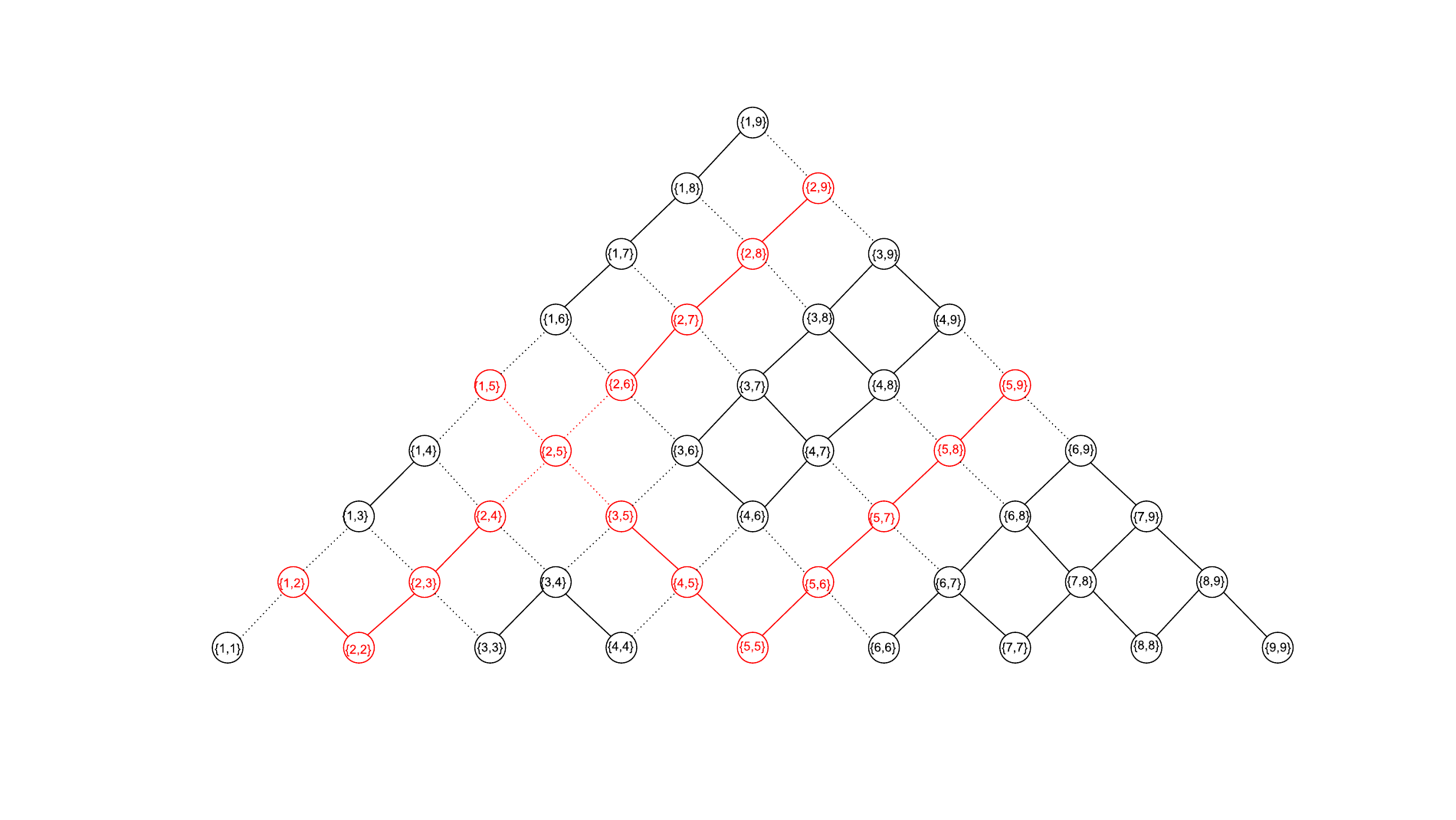}
\caption{Graph $T_{10} -R_2 \cup R_5$}
\label{completa2}
\end{center}
\end{figure}

\begin{customthm}{\ref{theorem04}}
Let $m \geq 3$ be an integer. Then 
\[ 
\alpha(C(F_{m,1}))=\alpha(C(P_{m}))+1
\]
\end{customthm}
\begin{proof}
Let $I$ be an independent set in $T_{m+1}$ of cardinality $|I|=\alpha(C(P_{m}))$. As $I \cup \{ \{m+1, m+1\}\}$ is an independent set in $C(F_{m, 1})$ then $\al(C(F_{m, 1}))\geq \alpha(C(P_{m}))+1$.

We will show that $\al(C(F_{m, 1}))\leq \alpha(C(P_{m}))+1$. The case $m= 3$ is easy so that we suppose that $m\geq 4$. Let $I$ be an independent set  in $C(F_{m,1})$. We have several cases.

{\it Case 1.} If $I\subseteq T_{m+1}$, then $| I |\leq \alpha(C(P_{m}))$. 

{\it Case 2.} If $I\subseteq B$, then $| I | \leq \lfloor (m+1)/2\rfloor \leq\lfloor (m+1)^{2}/4\rfloor$, because $B\simeq F_{1, m}$ and $m \geq 4$.

{\it Case 3.} If $I=I^{'}\cup I^{''}$, such that $I^{'}\subset T_{m+1}, I^{''}\subset B$, with $I^{'}$ e $I^{''}$ non empty sets. Let $S=\{i \colon \{i, m+1\} \in I''\}$. In a similar way  that in the proof of  Case 3 of Theorem~\ref{theorem01} we can show that $\cup_{i \in S}R_i \cap I'=\emptyset$. This shows that $I'$ is a  subset of $T_ {m+1}-\cup_{i \in S}R_i$. By corollary~\ref{coroimport2} we have that $|I'|\leq \floor{m^2/4}+1$ and as $B\simeq C_{m+1}$ then $|I''|\leq \floor{(m+1)/2}$. Therefore
\begin{eqnarray*}
|I|&\leq& \floor{m^2/4}+\floor{(m+1)/2}+1\\
&=& \floor{(m+1)^2/4}+1\\
&=&\alpha(C(P_m))+1, 
\end{eqnarray*}
where we are using part (2) of Proposition~\ref{propi}.
\end{proof}

\section{Proof of Theorem~\ref{theorem05}}

We proof Theorem~\ref{theorem05} by mean of Propositions~\ref{keven} and \ref{kodd}. 
For $q=1,\ldots,m$, let
\[
L_{q}:=\lbrace\lbrace j,m-(q-j)\rbrace\colon1\leq j\leq q\rbrace.
\]
It is clear that $|L_q|=q$, for every $q$, and that $\{L_1 \dots, L_q\}$ is a partition of $V(C(C_{m}))$. The following proposition shows that most of the sets $L_q$ are independent sets in $C(C_{m})$.
\begin{proposition}\label{lqciclo}
If $L_{q}$ is not an independent set in $C(C_{m})$, then $m=2q-1$, where $2\leq q \leq m-1$. 
\end{proposition}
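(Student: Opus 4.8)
The plan is to analyze when two vertices of $L_q$ can be adjacent in $C(C_m)$ directly from the definitions. Recall that $L_q = \{\{j, m-(q-j)\} : 1 \le j \le q\}$, where vertices are $2$-multisets of $V(C_m) = \{1,\dots,m\}$ (with arithmetic taken modulo $m$, identifying residue classes with $\{1,\dots,m\}$), and two vertices $\{x,y\}$, $\{u,v\}$ are adjacent iff their intersection is a single element $\{a\}$ and, in the case $x=u=a$, the other two coordinates $y$ and $v$ are adjacent in $C_m$. First I would write the generic element of $L_q$ as $\sigma_j := \{j,\, m-q+j\}$ for $1 \le j \le q$; note that as $j$ ranges over $1,\dots,q$ the first coordinate runs through $1,\dots,q$ and the second through $m-q+1,\dots,m$. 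I would first dispose of the degenerate ranges: if $q=1$ then $|L_q|=1$ and there is nothing to check, and the claim's hypothesis $2 \le q \le m-1$ should be read as saying that for $q=1$ and $q=m$ the set $L_q$ is automatically independent (for $q=m$, $L_m$ consists of all the "diagonal" pairs — I would check separately that these are pairwise non-adjacent, essentially because two multisets $\{j,j\}$, $\{j',j'\}$ with $j \ne j'$ are disjoint).

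Next, for $2 \le q \le m-1$, I would take two distinct indices $j < j'$ in $\{1,\dots,q\}$ and determine when $\sigma_j \sim \sigma_{j'}$. The four coordinates involved are $j,\, m-q+j,\, j',\, m-q+j'$. For adjacency we need $|\sigma_j \cap \sigma_{j'}| = 1$, so exactly one coincidence among the cross-pairs must occur; the relevant equalities are $j = m-q+j'$ (i.e. $j - j' = m - q$, impossible since $j-j' < 0$ and $m-q \ge 1$ unless we wrap mod $m$), $j' = m-q+j$ (i.e. $j' - j = m-q$), or $j \equiv j' \pmod m$ / $m-q+j \equiv m-q+j' \pmod m$ (impossible for $1 \le j < j' \le q \le m-1$). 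So the only way to get a shared element in the non-wrapped regime is $j' - j = m-q$, i.e. $q = m - (j'-j)$; since $1 \le j'-j \le q-1$ this forces $q \ge m-q+1$, and the shared element $a = j' = m-q+j$ is a single element precisely when the two vertices are not equal, which they are not. In that situation the remaining coordinates are $y = m-q+j'$ and $v = j$, and one must further check the $C_m$-adjacency condition on $\{y,v\} = \{m-q+j',\, j\}$; I would compute that this is an edge of $C_m$ exactly when $|m-q+j' - j| \in \{1, m-1\}$, and using $j' - j = m-q$ this reduces to $|2(m-q)| \equiv \pm 1 \pmod m$... rather, after substitution one finds the adjacency forces $m = 2q-1$. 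I expect the cleanest route is: the existence of an adjacent pair forces first $q \ge \lceil (m+1)/2 \rceil$ (from the intersection condition) and then the $C_m$-edge condition pins down $m-q+j' - j = \pm 1 \pmod m$, which combined with $j'-j = m-q$ yields $m-2q = \mp 1$, hence $m = 2q \pm 1$; the case $m=2q+1$ would need $q \ge \lceil(m+1)/2\rceil = q+1$, a contradiction, leaving only $m = 2q-1$. I should also handle the wrap-around coincidences ($j \equiv m-q+j' \pmod m$, etc.) and check they either cannot occur in the stated index range or also lead to $m=2q-1$.

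The main obstacle is the bookkeeping of the modular arithmetic: the coordinates live in $\mathbb{Z}/m\mathbb{Z}$ but the index $j$ ranges over an honest interval $\{1,\dots,q\}$, so I must be careful about which of the a priori possible coincidences $j = m-q+j'$, $j' = m-q+j$ (and their mod-$m$ variants) are actually realizable, and in each realizable case carry the $C_m$-adjacency test through to the conclusion $m=2q-1$. Once the arithmetic is organized — I would tabulate the at most a handful of cases for where the unique common element sits — each case is a short computation, and the proposition follows. As a sanity check I would verify the converse direction implicitly: when $m = 2q-1$, taking $j' - j = m-q = q-1$, i.e. $j=1, j'=q$, the vertices $\sigma_1 = \{1, q\}$ and $\sigma_q = \{q, 2q-1\} = \{q, m\}$ share $q$ and have remaining coordinates $1$ and $m$, which are adjacent in $C_m$ — so $L_q$ is indeed not independent in that case, confirming the statement is sharp.
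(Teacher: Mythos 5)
Your proposal is correct and follows essentially the same route as the paper: both reduce to the observation that two distinct vertices of $L_q$ can only share an element when the index difference equals $m-q$, and then the $C_m$-edge condition on the remaining pair forces $|2(m-q)|\in\{1,m-1\}$, hence $m=2q-1$ (your exclusion of $m=2q+1$ via the index range is a harmless variant of the paper's implicit parity/range argument). The deferred ``wrap-around'' cases are vacuous since all labels already lie in $\{1,\dots,m\}$, so no genuine gap remains.
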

\begin{proof}
Clearly $L_1=\{\{1, m\}\}$ and $L_m=\{\{1, 1\}, \{2, 2\}, \dots, \{m, m\}\}$ are independent sets and hence  $2\leq q \leq m-1$. As $L_q$ is not an independent set and $q \geq 2$, then there exists two adjacent vertices, say $\{i,m-(q-i)\}$ and $\{j, m-(q-j)\}$, in $L_q$. Notice that $i \neq j$ and $i \neq m-(q-i)$. Therefore $i =m-(q-j)$ and $|m-(q-i)-j| \in \{1, m-1\}$. From these equations we obtain that $|m-(q-i)-j|=|2(m-q)|$, which implies that $m=2q-1$. 
\end{proof}
Let $G$ be a graph and let $A$ and $B$ subsets of  $V(G)$.  We say that $A$ and $B$ are linked in $G$, and is denoted by $A \approx B$,  if $G$ has an edge $ab$ such that $a \in A$ and $b \in B$.

\begin{proposition}\label{links} Let $m\geq 4$. The subsets $L_{i}$ of $ V(C(C_m))$ previously defined are linked as follows: 
\begin{enumerate}
\item $L_{i}\approx L_{i+1}$, for $i\in \lbrace1,\ldots,m-1\rbrace$.
\item $L_{i}\approx L_{m-i+1}$, for $i\in\lbrace 1,\ldots,m-1\rbrace$.
\item All the links between the elements in $\lbrace L_{1}\ldots,L_{m}\rbrace$ are given by (1) and (2).
\end{enumerate}
\end{proposition}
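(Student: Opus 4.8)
The plan is to work directly from the definitions of the sets $L_q$ and of the pair graph $C(C_m)$. Recall $L_q = \{\{j, m-(q-j)\} : 1 \le j \le q\}$, so a vertex of $L_q$ is an (unordered) pair whose two coordinates $j$ and $m-(q-j)$ sum to $m+q$ (read mod $m$ when convenient). The strategy is to characterize, for two vertices $u = \{a,b\} \in L_p$ and $v = \{c,d\} \in L_q$, exactly when $u \sim v$ in $C(C_m)$, using the adjacency rule: $u \sim v$ iff $|u \cap v| = 1$ as multisets, say the common element is $e$, and the two "leftover" elements are either equal to $e$ paired in $C_m$ (the multiset case $a=c=e$ with $b,d$ adjacent in $C_m$) or the leftovers $b,d$ are adjacent in $C_m$. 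Since the coordinate-sum of $L_p$ is $m+p$ and that of $L_q$ is $m+q$, sharing an element $e$ forces the other coordinates to be $m+p-e$ and $m+q-e$, which differ by $p-q$. So I would split into the ordinary adjacency case (leftovers adjacent in $C_m$, i.e. their difference is $\pm 1 \pmod m$, giving $|p-q| \in \{1, m-1\}$, i.e. $q = p+1$ — this is item (1)) and the "diagonal" case $e = b = d$ or the symmetric configuration, which pins down $p + q$ rather than $p-q$.

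First I would prove item (1): given $L_i$ with $1 \le i \le m-1$, exhibit an explicit edge to $L_{i+1}$. A natural candidate is to take a vertex $\{j, m-(i-j)\} \in L_i$ and the vertex $\{j, m-(i+1-j)\} \in L_{i+1}$; these share the coordinate $j$, and their other coordinates $m-(i-j)$ and $m-(i-j)-1$ are consecutive, hence adjacent in $C_m$ — provided $j$ is genuinely distinct from both, which can be arranged by choosing $j$ appropriately (and handling the wrap-around at $\{1,m\}$ separately, using the cycle edge $\{1,m\}$). For item (2), I would similarly produce an edge between $L_i$ and $L_{m-i+1}$: here the coordinate sums are $m+i$ and $2m - i + 1 \equiv m - i + 1 \pmod m$, so a shared element $e$ forces the partners to be $m + i - e$ and $-i + 1 - e \pmod m$, which I want to be adjacent in $C_m$; picking $e$ so these differ by $1$ gives the required edge. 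Both constructions are short explicit verifications once the right representatives are chosen.

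The substantive part is item (3): \emph{no other} links occur. Here I would argue by contradiction. Suppose $L_p \approx L_q$ via an edge $\{a,b\}\{c,d\}$ with, say, $a = c = e$ the common element. In the "leftover adjacency" subcase, $b$ and $d$ are adjacent in $C_m$, and since $b = m+p-e$, $d = m+q-e \pmod m$, their difference is $p - q \pmod m$, so $p - q \equiv \pm 1$, forcing $q \in \{p-1, p+1\}$ — consistent with (1). In the "multiset" subcase ($b = d = e$, i.e. the vertex $\{e,e\}$, which only lives in $L_m$, or more generally the configuration where the shared element is doubled), the constraint becomes $p + q \equiv$ a fixed value mod $m$; working this out gives $p + q = m + 1$ (the relevant representative), which is exactly (2). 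I would need to be careful that the two coordinate representations of a vertex of $L_q$ are consistent — i.e., a vertex $\{a,b\} \in L_p \cap L_q$ forces $p = q$ unless wrap-around identifications intervene — and to check the boundary/degenerate pairs ($L_1$, $L_m$, and the self-paired vertices $\{k,k\}$) by hand. The main obstacle I anticipate is precisely this bookkeeping: disentangling the multiset adjacency clause of $C(C_m)$ from the ordinary one, and making sure the mod-$m$ arithmetic (especially the edge $\{1,m\}$ of $C_m$ and the "anti-diagonal" wrap in the definition of $L_q$) is handled without sign errors. Once the adjacency rule is translated into the two clean numerical conditions "$p - q \equiv \pm1$" and "$p + q \equiv m+1$", items (1)--(3) all fall out together.
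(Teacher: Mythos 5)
Your plan hinges on the claim that every vertex of $L_q$ has coordinate sum $m+q$, and this is false. A vertex of $L_q$ has the form $\{j,\,m-(q-j)\}=\{j,\,m-q+j\}$, so its coordinate sum is $m-q+2j$, which varies with $j$ (for $m=7$ one has $L_3=\{\{1,5\},\{2,6\},\{3,7\}\}$ with sums $6,8,10$). What is constant on $L_q$ is the \emph{difference} of the two coordinates, namely $m-q$. Hence the computation on which all three items are built --- ``sharing an element $e$ forces the other coordinates to be $m+p-e$ and $m+q-e$, which differ by $p-q$'' --- does not hold as stated, and the same false invariant underlies your derivation of (2). The arithmetic strategy can be repaired with the difference invariant: if vertices of $L_p$ and $L_q$ share an element $e$ that is the smaller (or the larger) coordinate of both, the leftovers differ by $p-q$, giving $p-q\equiv\pm1\pmod m$; if $e$ is the smaller coordinate of one and the larger coordinate of the other, the leftovers differ by $-(p+q)$ modulo $m$, giving $p+q\equiv\pm1\pmod m$. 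Note this second condition has nothing to do with the doubled vertices $\{e,e\}$ (they all lie in $L_m$ and are handled by the same computation), so your identification of the ``$p+q$ case'' with the multiset clause of the adjacency rule is also off.

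Even after that repair, the substantive content of (3) is exactly the bookkeeping you defer: the congruence $p+q\equiv-1\pmod m$ (i.e.\ $p+q=m-1$) must be ruled out, and it is ruled out only via the range constraints $1\le j\le q$, since cross-side sharing forces the common element to satisfy $e\le p$ and $e\ge m-q+1$, hence $p+q\ge m+1$; together with this, $p+q\equiv+1$ pins down $p+q=m+1$ (item (2)), while $p-q\equiv\pm1\pmod m$ yields either $|p-q|=1$ (item (1)) or $\{p,q\}=\{1,m\}$, a genuine link that belongs to item (2), not to (1) as your sketch asserts when it concludes ``forcing $q\in\{p-1,p+1\}$.'' Without these points the completeness claim (3) is not established (for instance $L_2$ and $L_4$ in $C(C_7)$ satisfy no link even though one might fear the wraparound case). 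For comparison, the paper argues more directly: explicit edges give (1) and (2) (e.g.\ $\{i,m\}\in L_i$ is adjacent to $\{i+1,m\}\in L_{i+1}$ and to $\{1,i\}\in L_{m-i+1}$), and for (3) it observes that only the extreme vertices $\{1,m-i+1\}$ and $\{i,m\}$ of $L_i$ can be adjacent to a vertex of $L_j$ with $|i-j|\neq1$, which forces $j=m-i+1$.
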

\begin{proof}
(1) For $1\leq i \leq m=1$, the sets $L_i$ and $L_{i+1}$ are linked because $\{i, m\} \in L_i, \{i+1,m\} \in L_{i+1}$ and $[\{i, m\}, \{i+1,m\}]$ is an edge in $C(C_m)$. \\
(2) By definition of $L_q$ it follows that $\{\{i, m\}$ and $\{1, m-(i-1)\}$ belongs to $L_i$, and the vertices $\{1, i\}$ and $\{m-(i-1), m\}$ belongs to $L_{m-(i-1)}$. As $[\{\{i, m\}, \{1, i\}]$ and $[\{1, m-(i-1)\}, \{m-(i-1), m\}]$ are edges in $C(C_m)$ we obtain that $L_i \approx L_{m-i+1}$.\\
(3) If $L_i$ is linked with a set $L_j$, with $|i-j| \neq 1$, then, by the construction of $C(C_m)$, the unique possible vertices in $L_i$ that could be adjacent with vertices in $L_j$ are $\{1, m-i+1\}$ and $\{i, m\}$. But this would implies that $j=m-i+1$ and we are in Case 2.
\end{proof}

\begin{proposition}\label{keven}
Let $k\geq 2$ be an integer. Then
\[
\alpha(C(C_{2k}))=k(k+1).
\] 
\end{proposition}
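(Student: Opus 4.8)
The plan is to establish the two bounds $\al(C(C_{2k}))\ge k(k+1)$ and $\al(C(C_{2k}))\le k(k+1)$ separately: the lower bound by exhibiting an explicit independent set assembled from the sets $L_q$, and the upper bound by a single vertex‑deletion estimate that exploits the fact that removing all multisets through one vertex leaves a pair graph of a path.

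For the lower bound I would take $I_0:=L_2\cup L_4\cup\dots\cup L_{2k}$. Since $m=2k$ is even, $m$ is never of the form $2q-1$, so by Proposition~\ref{lqciclo} every $L_{2i}$ is an independent set of $C(C_{2k})$. Moreover, by Proposition~\ref{links}(3) two sets $L_a$ and $L_b$ are linked only when $|a-b|=1$ or $a+b=2k+1$; for distinct even indices $2i$ and $2j$ neither condition can hold (the difference is even and at least $2$, and $2i+2j$ is even while $2k+1$ is odd), so there is no edge between $L_{2i}$ and $L_{2j}$. Hence $I_0$ is independent, and since $|L_q|=q$ we get $|I_0|=\sum_{i=1}^{k}2i=k(k+1)$, giving $\al(C(C_{2k}))\ge k(k+1)$. (Incidentally $I_0$ is the whole ``even'' side of the natural bipartition of $C(C_{2k})$, but this need not be used.)

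For the upper bound, fix any vertex $v\in V(C_{2k})$ and let $R_v$ be the set of all $2$-multisets of $V(C_{2k})$ containing $v$; then $V(C(C_{2k}))$ is partitioned into $R_v$ and its complement. First, the subgraph induced by $R_v$ is isomorphic to $C_{2k}$ via $\{v,j\}\mapsto j$ (using that $\{v,v\}\sim\{v,j\}$ iff $v\sim j$ in $C_{2k}$, a fact implicit around Theorem~\ref{iso-cyd}, and that $\{v,j\}\sim\{v,j'\}$ iff $j\sim j'$), so every independent set meets $R_v$ in at most $\al(C_{2k})=k$ vertices. Second, by the pair‑graph analogue of Proposition~\ref{pborrado} the subgraph induced by the complement of $R_v$ is isomorphic to $C(C_{2k}-v)=C(P_{2k-1})$, which by Theorem~\ref{iso-cyd} is isomorphic to $P_{2k}^{(2)}$, whose independence number is $\floor{(2k)^2/4}=k^2$. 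Therefore any independent set $I$ of $C(C_{2k})$ satisfies $|I|=|I\cap R_v|+|I\setminus R_v|\le k+k^2=k(k+1)$, which finishes the proof.

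This argument is short and I do not foresee a real obstacle; the only points needing care are the verification that $R_v$ induces a copy of $C_{2k}$ (a direct check from the definition of the pair graph together with the adjacency rule for the ``diagonal'' vertex $\{v,v\}$) and the statement that deleting all multisets through $v$ realizes $C(C_{2k}-v)$, which is the pair‑graph version of Proposition~\ref{pborrado}. What is special about even $m$ is precisely that the crude split $k+k^2$ is already tight; for $m=2k+1$ (Proposition~\ref{kodd}) the same deletion only yields $k+(k^2+k)$, which overshoots the target, so there a finer analysis via the $L_q$'s and Proposition~\ref{links} is unavoidable.
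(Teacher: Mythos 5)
Your proposal is correct, and while your lower bound coincides with the paper's (both take $L_2\cup L_4\cup\dots\cup L_{2k}$, use Propositions~\ref{lqciclo} and~\ref{links} to see it is independent, and sum $|L_q|=q$ to get $k(k+1)$), your upper bound takes a genuinely different route. You fix a vertex $v$ and partition $V(C(C_{2k}))$ into the multisets containing $v$, which induce a copy of $C_{2k}$ and so meet any independent set in at most $\al(C_{2k})=k$ vertices, and the remaining multisets, which induce $C(C_{2k}-v)\simeq C(P_{2k-1})\simeq P_{2k}^{(2)}$ and contribute at most $\floor{(2k)^2/4}=k^2$, giving $k(k+1)$ in total. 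The paper instead notes that $C(P_{2k})$ is a spanning subgraph of $C(C_{2k})$ (same vertex set, $E(C(P_{2k}))\subset E(C(C_{2k}))$), so every independent set of $C(C_{2k})$ is independent in $C(P_{2k})$, and then $\al(C(C_{2k}))\le\al(C(P_{2k}))=\al(P_{2k+1}^{(2)})=\floor{(2k+1)^2/4}=k(k+1)$ by Theorem~\ref{iso-cyd}; this edge-deletion monotonicity argument is a one-liner with no structural verification needed. Your version requires checking that the multisets through $v$ induce $C_{2k}$ (the diagonal vertex $\{v,v\}$ behaves correctly) and the pair-graph analogue of Proposition~\ref{pborrado}, which the paper never states for pair graphs but does use implicitly (e.g.\ in Propositions~\ref{procompleta} and~\ref{procompleta2}), so these steps are legitimate; in exchange, your decomposition is exactly the local picture ($R_q$ plus the pair graph of a path) that powers the odd case and the wheel theorems, so it integrates well with the rest of the paper even if it is slightly heavier here. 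As you observe, both upper-bound tricks are tight precisely because $m$ is even, and neither suffices alone for $m=2k+1$.
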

\begin{proof}
By Propositions~\ref{lqciclo} and~\ref{links} we have that 
\[
I=L_2 \cup L_4 \cup \dots \cup L_{m-2} \cup L_m 
\]
is an independent set in $C(C_m))$.
Now
\begin{eqnarray*}
|I|&=&|L_2|+|L_4|+\dots + |L_{2k-2}|+|L_{2k}|\\
&=&2+4+\dots + 2k-2+2k\\
&=&k(k+1).
\end{eqnarray*}
Therefore $\alpha(C(C_{2k}))\geq k(k+1)$.

Now, as $C(P_m)$ is a subgraph of $C(C_m)$ with $V(C(P_m))=V(C(C_m))$ and $E(C(P_m))\subset E(C(C_m))$, then $
\alpha(C(C_m))\leq \alpha (C(P_m))$. Using Theorem~\ref{iso-cyd} we obtain
\begin{eqnarray*}
\alpha(C(C_{2k}))&\leq& \alpha (C(P_{2k}))\\
&=& \alpha(P_{2k+1}^{(2)}))\\
&=& \floor{(2k+1)^2/4}\\
&=&  k(k+1).
\end{eqnarray*}
\end{proof}

The following proposition will be useful.

\begin{proposition}\label{not1n}
Let $n=2k+1$ be an odd positive integer. Let $I$ be and independent set of $C(C_{n})$. If the vertex $\{1, n\}$ belongs to $I$, then there exist an independent set $I'$ such that $\{1, n\} \not\in I'$ and $|I'|\geq |I|$.
\end{proposition}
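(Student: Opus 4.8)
The plan is to exploit the symmetry of $C(C_n)$ together with the combinatorial description of the sets $L_q$ and their links given in Propositions~\ref{lqciclo} and~\ref{links}. The vertex $\{1,n\}$ is the unique element of $L_1$, so it is linked (as a set) to $L_2$ and to $L_{n}$ (via item (2) of Proposition~\ref{links}, with $i=1$, giving $L_1\approx L_{m-i+1}=L_n$; but $L_n$ consists of the "diagonal" multisets $\{j,j\}$). The idea is that removing $\{1,n\}$ from $I$ frees up its neighborhood, and because of the rotational symmetry of $C_n$ (an automorphism group of order $n$, odd), we can relocate $I$ to an independent set of the same size that avoids $\{1,n\}$. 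Concretely, I would first identify $N(\{1,n\})$ in $C(C_n)$ explicitly: by the adjacency rule, $\{1,n\}$ is adjacent to $\{1,x\}$ whenever $x$ is a neighbor of $n$ in $C_n$ (i.e. $x\in\{1,n-1\}$, but $x=1$ gives the multiset $\{1,1\}$, and the "shared element is $1$" clause forces $n\sim x$ in $C_n$), to $\{n,y\}$ whenever $y\sim 1$ in $C_n$ (i.e. $y\in\{2,n\}$), and to nothing else. So $N(\{1,n\})=\{\{1,1\},\{1,n-1\},\{n,n\},\{2,n\}\}$, a set of only four vertices.

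The main step is a swapping argument. Suppose $\{1,n\}\in I$. Then none of the four neighbors listed above lies in $I$. I would try to produce $I'$ by deleting $\{1,n\}$ and adding one carefully chosen vertex that is forced to be non-adjacent to all of $I\setminus\{\{1,n\}\}$; the natural candidate is the diagonal vertex $\{k+1,k+1\}\in L_n$ (the "antipodal" diagonal element when $n=2k+1$), or more robustly, one should argue that among the $n$ rotations of the configuration, at least one avoids $\{1,n\}$ entirely without losing vertices — but since $\{1,n\}\in I$, a pure rotation does not immediately help. The cleaner route: show directly that $I\setminus\{\{1,n\}\}$ can be augmented. Since $L_n$ (the diagonal) is an independent set and each diagonal vertex $\{j,j\}$ has a controlled neighborhood, one checks that some diagonal vertex not already excluded by $I$ exists, OR that the whole set $I$ can be replaced by its image under a reflection of $C_n$ that sends the edge $\{1,n\}$ off itself — here using that $C_n$ has reflections and that $C(C_n)$ inherits all graph automorphisms of $C_n$. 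I expect the proof in the paper to use the partition into the $L_q$'s: if $\{1,n\}=L_1\subseteq I$, then $L_2\cap I=\emptyset$ and one argues $|L_2\cap I|$ could have been as large as... — i.e., a local exchange near $L_1,L_2$ replacing $\{1,n\}$ by a vertex of $L_n$.

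I would organize the write-up as: (i) recall that $\{L_1,\dots,L_m\}$ partitions $V(C(C_n))$ and that $L_1=\{\{1,n\}\}$, $L_n$ is the independent "diagonal" set; (ii) compute $N(\{1,n\})$ explicitly (four vertices, all among $L_2\cup L_n$); (iii) exhibit a diagonal vertex $v\in L_n$ (e.g. $v=\{k+1,k+1\}$) and verify that $N(v)$ meets $I$ in at most one vertex, then show that vertex (if it exists) can itself be swapped, or that $v$ is simply addable; conclude $I'=(I\setminus\{\{1,n\}\})\cup\{v\}$ is independent with $|I'|\ge|I|$. The main obstacle is step (iii): it is not obvious that a single globally-valid replacement vertex exists, so the argument likely needs the odd parity $n=2k+1$ to guarantee that the relevant "antipodal" diagonal vertex has a neighborhood disjoint from — or nearly disjoint from — any independent set containing $\{1,n\}$. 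If a single universal choice fails, the fallback is a short case analysis on whether $\{1,n-1\}$-type or $\{2,n\}$-type structure appears, using Proposition~\ref{links}(3) to limit which $L_j$ can interfere. Getting this local exchange to be genuinely size-nondecreasing (not just size-preserving up to a possibly-empty swap) is the crux.
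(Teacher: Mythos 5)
You correctly computed $N(\{1,n\})=\{\{1,1\},\{1,n-1\},\{2,n\},\{n,n\}\}$ and correctly located the crux, but the proposal does not close it, and the specific devices you float provably cannot work. Take $I=L_1\cup L_{n-1}$, i.e.\ the $n$ vertices $\{i,i+1\}$, $1\le i\le n-1$, together with $\{1,n\}$ (all ``edge'' pairs of $C_n$). For odd $n\ge 5$ this is an independent set (by Propositions~\ref{lqciclo} and~\ref{links}, or directly: any two of its members have symmetric difference of the form $\{j,j+2\}$, not an edge of $C_n$), it contains $\{1,n\}$, and it is invariant under every automorphism of $C(C_n)$ induced by the dihedral group of $C_n$, so the rotation/reflection idea can never move $\{1,n\}$ out of it. Moreover every diagonal vertex $\{j,j\}$ has both neighbours $\{j-1,j\}$ and $\{j,j+1\}$ (indices mod $n$) in $I$, and at least one of them survives the deletion of $\{1,n\}$; hence neither $\{k+1,k+1\}$ nor any other diagonal vertex is addable, and for $n=5$ one checks that \emph{no} vertex at all can be added to $I\setminus\{\{1,5\}\}$. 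So any argument of the shape ``delete $\{1,n\}$ and add one well-chosen vertex'' — which is what your steps (i)--(iii) and the proposed fallback case analysis amount to — fails in general; a one-for-one local exchange cannot yield $|I'|\ge|I|$.

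The missing idea, and what the paper actually does, is a many-for-many exchange against the whole diagonal. Write $I=I_1\cup I_2$ with $I_2=I\cap L_n$, let $m=|I\cap L_{n-1}|$, and set $I'=\bigl(I_1\setminus(L_{n-1}\cup\{\{1,n\}\})\bigr)\cup L_n$. Since by Proposition~\ref{links} the only classes linked to $L_n$ are $L_1$ and $L_{n-1}$, the set $I'$ is independent and avoids $\{1,n\}$. For the size: $\{1,n\}\in I$ forbids $\{1,1\}$ and $\{n,n\}$ from $I$, and each $\{j,j+1\}\in I$ forbids $\{j,j\}$ and $\{j+1,j+1\}$, so at least $m+1$ diagonal vertices are absent from $I$, i.e.\ $|I_2|\le n-m-1$; since $|I'|=|I_1|-m-1+n$, this gives $|I'|\ge |I_1|+|I_2|=|I|$. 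Your caution about the crux was well placed, but without this global swap and the counting that pays for it, the proposal does not constitute a proof.
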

\begin{proof}
Let $I=I_1 \cup I_2$, where $I_2=L_n \cap I$ and $I_1=I-I_2$. If $\{1, n\} \in I$, then the vertices $\{1, 1\}$ and $\{n, n\}$ does not belongs to $I$. Let $m=|L_{n-1} \cap I|$. Then, there are at least $m$ vertices in $L_n-\{\{1, 1\}, \{n, n\}\}$ such that does not belongs to $I$. That is, $|L_n \cap I|\leq n-2-m$, and hence $|I|\leq |I_1|+n-2-m$.  We construct $I'=I'_1\cup I'_2$ as follows,   $I'_1=I_1-(L_{n-1}\cap I) \cup \{\{1, n\}\}$ and $I'_2=L_n$. Clearly $I'$ is an independent set.  Therefore 
\[
|I'|=|I'_1|+|I'_2|=|I_1|-m-1+n\geq |I|.
\]
\end{proof}
If $x$ is a vertex in $V(C(C_n))$ of the form $\{1, j\}$ or $\{j, n\}$, for some $j$, then $x$ is called an {\it extreme vertex} in $C(C_n)$. Let $\{i, j\} \in V(C(C_n))$, with $i<j$. We say that $x \in \{\{i, j+1\}, \{i+1, j\}\}$ (resp. $x \in \{\{i-1, j\}, \{i, j-1\}\}$) is a {\it right neighbor} of $\{i, j\}$ (resp. {\it left neighbor}) if $x$ is adjacent to $\{i,j\}$ in $C(C_n)$.   

\begin{proposition}\label{kodd}
Let $k\geq 1$ be an integer. Then
\[
\alpha(C(C_{2k+1}))=k^2+k+\floor{(k+1)/2}.
\] 
\end{proposition}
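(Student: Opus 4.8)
Throughout I write $m=2k+1$ and keep the partition $\{L_1,\dots,L_m\}$ of $V(C(C_m))$ together with the link description of Propositions~\ref{lqciclo} and~\ref{links}. The plan is to prove the two matching inequalities separately.

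\emph{The lower bound.} I would exhibit an independent set of the claimed size as a union of whole layers, $\bigcup_{q\in S}L_q$, where
\[
S=\{\,q\colon 2\le q\le k,\ q\ \text{even}\,\}\ \cup\ \{\,q\colon k+2\le q\le m,\ q\equiv m \pmod 2\,\}.
\]
Three things must be checked. First, each $L_q$ with $q\in S$ is internally independent: by Proposition~\ref{lqciclo} the only bad layer is $L_{k+1}$, and $k+1\notin S$. Second, $S$ has no two consecutive integers (inside each block the gap is $2$, and the two blocks are separated by a gap of at least $2$). Third, $S$ contains no pair $\{q,m+1-q\}$: the map $q\mapsto m+1-q=2k+2-q$ carries the lower block into $\{k+2,\dots,2k\}$ with the same parity, which meets neither block, and symmetrically for the upper block. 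By Proposition~\ref{links} these are all the possible links, so $\bigcup_{q\in S}L_q$ is an independent set, of size $\sum_{q\in S}q$. Pairing the two blocks through $q\leftrightarrow m+1-q$ one computes $\sum_{q\in S}q=k(k+1)+\floor{(k+1)/2}$, so $\al(C(C_{2k+1}))\ge k^2+k+\floor{(k+1)/2}$.

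\emph{The upper bound.} Let $I$ be a maximum independent set. By Proposition~\ref{not1n} — indeed by its proof, which replaces $I$ by an at least as large independent set that contains all of $L_m$ and misses $\{1,m\}$ — I may assume $L_m\subseteq I$ and $\{1,m\}\notin I$. Since each diagonal vertex $\{i,i\}\in L_m$ is adjacent exactly to $\{i-1,i\}$ and $\{i,i+1\}$, having $L_m\subseteq I$ forces $I\cap L_{m-1}=\emptyset$ and $I\cap L_1=\emptyset$. Hence $I=L_m\sqcup J$ with $J\subseteq L_2\cup\dots\cup L_{m-2}$, and it remains to prove $|J|\le k^2-k-1+\floor{(k+1)/2}$. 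For this I would bound $|J|=\sum_{q=2}^{m-2}|I\cap L_q|$ by combining: (a) $|I\cap L_{k+1}|\le k$, because $L_{k+1}$ has exactly one internal edge, namely between $\{1,k+1\}$ and $\{k+1,2k+1\}$; (b) for each $i$ the induced subgraph $C(C_m)[L_i\cup L_{i+1}]$ is explicit and sparse (a path or cycle with at most a couple of extra edges near the middle layer), so that $|I\cap L_i|+|I\cap L_{i+1}|$ is forced well below $|L_i|+|L_{i+1}|$; and (c) the reflection links $L_i\approx L_{m+1-i}$, which prevent two far-apart layers from both being nearly full. A careful accounting over $q\in\{2,\dots,m-2\}$, calibrated to the extremal configuration of the lower-bound construction, should then yield the required inequality, which together with the lower bound proves the theorem.

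\emph{Where the difficulty lies.} The real work is the upper bound, and within it steps (b) and (c): a ``link'' between two layers is only a single edge, so the naive bounds $|I\cap L_q|\le|L_q|$ lose far too much, and (unlike the even case of Proposition~\ref{keven}, where comparison with $C(P_m)$ already suffices) one must here pin down the precise bipartite adjacency between consecutive layers and how it interacts with the reflection links and with the lone internal edge of $L_{k+1}$. I expect this will demand either a delicate layer-by-layer exchange argument or an induction on $k$.
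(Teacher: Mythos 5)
Your lower bound is correct and is in fact identical to the paper's: your set $S$ packages, in one formula, the paper's choice $L_2\cup L_4\cup\dots\cup L_{k-1}\cup L_{k+2}\cup L_{k+4}\cup\dots\cup L_{2k+1}$ for $k$ odd and $L_2\cup\dots\cup L_k\cup L_{k+3}\cup\dots\cup L_{2k+1}$ for $k$ even, and your three checks against Propositions~\ref{lqciclo} and~\ref{links} are the right ones. The problem is the upper bound, which you yourself describe as ``the real work'': what you offer there is a plan, not a proof. Items (b) and (c) are exactly where the content lies, and they are left at the level of ``a careful accounting \dots should then yield the required inequality.'' To see that this is not a routine gap: even granting that the induced subgraph on $L_i\cup L_{i+1}$ is a path (true for odd $m$ away from the middle layer), pairing the layers $L_2,\dots,L_{2k-1}$ consecutively gives only $|J|\le\sum_{j=1}^{k-1}(2j+1)=k^2-1$, which overshoots your target $k^2-k-1+\floor{(k+1)/2}$ by $\floor{k/2}$; that surplus must be recovered from the single internal edge of $L_{k+1}$ and from the reflection edges $L_i\approx L_{m+1-i}$, which connect layers that your consecutive pairing never sees together, and you give no mechanism for extracting it. The paper avoids counting altogether: starting from an arbitrary independent set (with $\{1,n\}$ removed via Proposition~\ref{not1n}), it performs three (resp.\ four, for $k$ even) explicit exchange steps that push every vertex lying in an unwanted layer to a right or left neighbour in an adjacent wanted layer, processed in a greedy order so that independence and cardinality are preserved, ending with an independent set contained in $L$; this yields $|I|\le|L|$ directly. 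Some argument of that kind --- or the induction on $k$ you allude to --- is what is missing; as written, the inequality $\alpha(C(C_{2k+1}))\le k^2+k+\floor{(k+1)/2}$ is not established.

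A smaller point: Proposition~\ref{not1n} only says that $\{1,m\}$ can be removed when it belongs to $I$; it does not by itself let you assume $L_m\subseteq I$. That normalization is true and easy, but you should argue it directly: every neighbour of a diagonal vertex $\{i,i\}$ lies in $L_{m-1}\cup L_1$, so replacing $I\cap(L_{m-1}\cup L_1)$ (at most $m$ vertices) by all of $L_m$ ($m$ vertices) gives an independent set at least as large, which contains $L_m$ and misses $\{1,m\}$ and $L_{m-1}$. With that repair the normalization is fine, but the main gap above remains.
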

\begin{proof}
The case $k=1$ is easy so we assume that $k\geq 2$. \\

{\bf Case $k$ odd}. Let 
\[
L=L_2\cup L_4 \cup \dots \cup L_{k-1} \cup L_{k+2} \cup L_{k+4} \dots \cup L_{2k+1},
\]
which is an independent set of $C(C_{2k+1})$ (by Propositions~\ref{lqciclo} and~\ref{links}).
We have that 
 \begin{eqnarray*}
|L|&=&2+4 +\dots+(k-1)+(k+2)+(k+4)+\dots+(2k+1)\\
&=&k^2+\frac{3}{2}k+\frac{1}{2}\\
&=&k^2+k+\left\lfloor \frac{k+1}{2} \right\rfloor,
\end{eqnarray*}
which shows that $\alpha(C(C_{2k+1}))\geq k^2+k+\floor{(k+1)/2}$. 

Now we prove that $\alpha(C(C_{2k+1})) \leq  k^2+k+\left\lfloor \frac{k+1}{2} \right\rfloor$. 

Let $n=2k+1$. Let $A=L_1 \cup L_2 \cup \dots \cup L_{k}$ and $B=L_{k+1} \cup L_{k+2} \cup \dots \cup L_{n}$.
Let $I$ be and independent set in $C(C_{2k+1})$. By Proposition~\ref{not1n} we can assume that $\{1, n\} \not \in I$. Let  $I_1=I \cap A$ and $I_2=I \cap B$. 

We will construct and independent set $I'$ of $C(C_{2k+1})$ such that $|I'| \geq |I|$ and $I'\subseteq L$. First we list the steps and then we say how to do each step.
\begin{itemize}
\item[Step 1.]To construct an independent set $I_2'$ from $I_2$ such that $|I_2'|\geq |I_2|$ and 
\[
I_2' \subseteq L_{k+2} \cup L_{k+4} \cup \dots \cup L_{2k+1}
\]
\item[Step 2.] To construct a set $I''$ from $I_1\cup I_2'$ by interchanging every vertex $\{i, n\}\in I\cap (L_1 \cup L_3 \cup \dots \cup L_k)$ (if any) with $\{1, i\}$. 
\item[Step 3.] To construct $I'$ from $I''$ by interchanging every vertex in $I'' \cap L_3 \cup L_5 \cup \dots \cup L_{k}$ with a vertex in $L_2 \cup L_4 \cup \dots \cup L_{k-1}$ in such a way that $I'\subseteq L$.
\end{itemize}
The set $I_2'$ in Step 1 is obtained in the following  way:

Let $W=\{k+1, k+3, \dots, 2k\}$. Let $i_1$ be the greatest integer in $W$ such that $ L_{i_1} \cap I_2\neq \emptyset$. That is, $L_w \cap I_2=\emptyset$, for every $w \in W$ with $w >i_1$.  To construct $J_1$ from $I_2$ by interchanging every vertex in $L_{i_1} \cap I_2$ with its right neighbor in $L_{i_1+1}$. Now, Let $i_2$ be the greatest integer in $W-\{i_1\}$ such that $ L_{i_2} \cap J_1\neq \emptyset$.  To construct $J_2$ from $J_1$ by interchanging every vertex in $L_{i_2} \cap J_1$ with its right neighbor in $L_{i_2+1}$. We will continue this procedure until we find a set $J_{r}$ such that $W-\{i_1, \dots, i_r\}=\emptyset$ or $L_w \cap J_{r} =\emptyset$, for any $w \in W-\{i_1, \dots, i_r\}$. That is, we will finish until all the vertices in $I_2 \cap \left(L_{k+1} \cup L_{k+3} \cup \dots \cup L_{2k}\right)$ has been interchanged with vertices in $L_{k+2} \cup L_{k+4} \cup \dots \cup L_{2k+1}$.  We obtain the desired independent set by making $I_2'=J_{i_r}$. \\

Let $X=L_1 \cup L_3 \cup \dots \cup L_k$. Notice that Step 2 can be done because if $\{i, n\} \in I_1\cap X$, then $\{1, i\} \not \in I_2'$. We have that  $\{1, i\} \in L_{k+2} \cup L_{k+4} \cup \dots \cup L_{2k+1}$ because $\{i, n\} \in X$ and hence $I''\cap B \subseteq L_{k+2} \cup L_{k+4} \cup \dots \cup L_{2k+1}
$.  
 
Finally we show how to realize Step 3.  Let $I_1'=I''\cap X$. First, we construct $M_1$ from $I''$ by   interchange all the vertices in $I_1'$ as follows: to select the smallest integer $i$ in $\{3, 5, \dots, k\}$ such that $I_1'\cap L_i \neq \emptyset$ and $I_1'\cap L_j=\emptyset$, for every $j \in \{3, 5, \dots, k\}$ with $j<i$. To obtain $M_1$ from $I''$ by interchange every vertex in $I_1'\cap L_{i}$ with its respective right neighbor in $L_{i-1}$. To repeat this process to obtain $M_2$ from $M_1$ but now with the smallest integer in $\{3, 5, \dots, k\}-\{i_1\}$ such that $I_1'\cap L_i \neq \emptyset$ and $I_1'\cap L_j=\emptyset$, for every $j \in \{3, 5, \dots, k\}-\{i_1\}$ with $j<i$. To continue in this way until a set $M_r$ is obtained in which all the vertices in $I_1'\cap X$ has been interchanged by vertices in  $L_2 \cup L_4 \cup \dots \cup L_{k-1}$. Now to make $I'=M_s$.  
 Notice that  $|I'|\geq |I|$ and $I'\subseteq L$  which implies that $|I|\leq |L|$ as desired.

{\bf Case $k$ even.}

First we show that $\alpha(C(C_{2k+1}))\geq k^2+k+\floor{(k+1)/2}$.
Let
\[
L=L_2\cup L_4 \cup \dots \cup L_k \cup L_{k+3} \cup L_{k+5}\cup \dots \cup L_{2k+1},
\]
which is an independent set of $C(C_{2k+1})$.
We have that
\begin{eqnarray*}
|L|&=&2+4+ \dots + k + (k+3)+(k+5)+\dots +(2k+1)\\
&=&\frac{1}{4} k (k+2) + \frac{1}{4} (4 k + 3 k^2)\\
&=&k^2+\frac{3}{2}k=k^2+k+\left\lfloor \frac{k+1}{2} \right\rfloor.
\end{eqnarray*}

Now we prove that $\alpha(C(C_{2k+1}))\leq k^2+k+\floor{(k+1)/2}$.

Let $I$ be any independent set of $C(C_{2k+1})$. By Proposition~\ref{not1n} we can assume that $\{1, n\} \not \in I$. We will obtain an independent set $I'$ such that $I'\subseteq L$ and $|I| \leq |I'|$. Let $I_1=I \cap \left(L_2 \cup L_3 \cup \dots \cup L_{k+1}\right)$ and $I_2=I \cap \left(L_{k+2} \cup L_{k+3} \cup \dots \cup L_{2k+1}\right)$. 

Now we obtain $I'$ with the following steps.
\begin{itemize}
\item[Step 1.]  To obtain $I_2'$ from $I_2$ by interchanging every vertex in $I_2\cap L_i $ with its right neighbor in $L_{i+1}$, for every $i \in \{k+2, k+4, \dots, 2k\}$, in the same way Step 1 for the case $k$ odd.
\item[Step 2.]  To obtain $I''$ from $I_1\cup I_2'$ by interchanging all the vertices of the form $\{a, n\} \in I \cap L_i$ with $i \in \{3, 5, \dots, k-1\}$ (if any) with $\{1, a\}$.
\item[Step 3.] To obtain $I'''$ from $I''$ by interchange every vertex  in $I'' \cap L_i$, for $i \in \{3, 5, \dots, k-1\}$ with its right neighbor in $L_{i-1}$ in the same way that in Step 3 of the case $k$ odd.
\item[Step 4.] To obtain $I'$ from $I'''$ by interchange the vertices in $I \cap L_{k+1}$ as follows: as $L_{k+1}$ is linked with $L_{k+1}$ by the edge $[\{1, k+1\}, \{k+1, n\}]$, then only one of this vertices could be in $I$. If $\{\{1, k+1\}, \{k+1, n\}\} \cap I =\emptyset$, then move all the vertices in $I'''\cap L_{k+1}$ to its right neighbors in $L_k$. If $\{1, k+1\} \in I$, then move all the vertices in $I'''\cap L_{k+1}$ to its right neighbors in $L_k$, and if $\{k+1, n\} \in I$, then  move all the vertices in $I''\cap L_{k+1}$ to its left neighbors in $L_k$.    
\end{itemize}
\end{proof}

\section{Proof of Theorem~\ref{theorem06}}
In this proof, the vertices of $C_m$ in $W_{m, 1}$ are $1, \dots, m$ and the vertex in $K_1$ is $m+1$. We use $U_{m}$ to denote the subgraph $C(C_m)$ of $C(W_{m, 1})$. The sets $R_i$ are defined as in the proof of Theorem 1.4. Let 
\[
B=\{\{i, m+1\} : 1\leq i \leq m+1\}.
\]
Notice that the subgraph of $C(W_{m, 1})$ induced by $B$ is isomorphic to $W_{m,1}$ that has $\alpha(W_{m, 1})=\floor{m/2}$. We use indistinctly $B$ as set of vertices or as the subgraph of $W_{m, 1}$ induced by this set of vertices.   
\begin{proposition}\label{procompleta2}
Let $m \geq 2$ be an integer. If $S$ is a subset of $\{1, \dots, m\}$, then $\al(U_{m}-\cup_{i \in S}R_i) \leq \floor{m^2/4}$.
\end{proposition}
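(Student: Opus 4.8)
\textbf{Proof proposal for Proposition~\ref{procompleta2}.}

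The plan is to mimic the proof of Proposition~\ref{procompleta} (the analogous statement for $C(F_{m,1})$ / $T_{m+1}$), using that $U_m$ is isomorphic to $C(C_m)$ and that deleting $R_i$ from it corresponds to a token-graph deletion. First I would invoke Proposition~\ref{pborrado}: deleting the set $R_i$ of all $2$-multisets containing $i$ from $U_m = C(C_m)$ yields the complete double vertex graph $C(C_m - i)$, and since $C_m - i \cong P_{m-1}$, we get $U_m - R_i \cong C(P_{m-1}) \cong P_m^{(2)}$ by Theorem~\ref{iso-cyd}. Hence $\al(U_m - R_i) = \floor{m^2/4}$ for every single index $i$. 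For a general nonempty $S$, the graph $U_m - \cup_{i\in S}R_i$ is an induced subgraph of $U_m - R_j$ for any fixed $j \in S$, so Lemma~\ref{lemasubg} gives $\al(U_m - \cup_{i\in S}R_i) \le \al(U_m - R_j) = \floor{m^2/4}$. The case $S = \emptyset$ is $\al(U_m) = \al(C(C_m))$; here I would note that $C(C_m)$ is a subgraph of $C(P_m) \cong P_{m+1}^{(2)}$ on the same vertex set (as used in the proof of Proposition~\ref{keven}), so $\al(C(C_m)) \le \al(P_{m+1}^{(2)}) = \floor{(m+1)^2/4}$ --- but that bound is too weak, so instead I would simply observe that the claim for $S=\emptyset$ is not actually needed downstream, or handle it by Theorems~\ref{theorem05} (which gives $\al(C(C_m)) = k(k+1) = \floor{m^2/4}$ when $m=2k$, and $k^2+k+\floor{(k+1)/2} \le \floor{m^2/4} = k^2+k$ when $m=2k+1$ --- wait, this fails).

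Let me reconsider: the statement as written includes $S = \emptyset$ implicitly (it says ``$S$ is a subset'', not ``nonempty subset''), but for $m = 2k+1$ we have $\al(C(C_m)) = k^2+k+\floor{(k+1)/2} > k^2+k = \floor{m^2/4}$, so the bound would be false for $S=\emptyset$. Therefore I expect the intended reading is that $S$ is nonempty (matching Corollary~\ref{coroimport2}), and I would state the proof accordingly: the reduction $U_m - R_i \cong P_m^{(2)}$ for a single $i$, then Lemma~\ref{lemasubg} for larger $S$. The core content is thus the single-index case, which is a clean consequence of Proposition~\ref{pborrado}, the isomorphism $C_m - i \cong P_{m-1}$, Theorem~\ref{iso-cyd}, and $\al(P_m^{(2)}) = \floor{m^2/4}$ from \cite{alba}.

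The main obstacle, and the only subtle point, is the $S=\emptyset$ edge case flagged above: if the proposition genuinely intends to allow $S = \emptyset$, the bound $\floor{m^2/4}$ is incorrect for odd $m$, so either the hypothesis must read ``nonempty $S$'' or the statement must be weakened. Assuming the nonempty reading (which is all that is used in the proof of Theorem~\ref{theorem06}), there is no real difficulty: everything reduces to the known value $\al(P_m^{(2)}) = \floor{m^2/4}$ together with the deletion-isomorphism lemmas already established. I would write roughly: ``For $i \in \{1,\dots,m\}$, by Proposition~\ref{pborrado} the graph $U_m - R_i$ is isomorphic to $C(C_m - i) \cong C(P_{m-1}) \cong P_m^{(2)}$ by Theorem~\ref{iso-cyd}, so $\al(U_m - R_i) = \floor{m^2/4}$. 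For $S$ nonempty, pick $j \in S$; then $U_m - \cup_{i\in S}R_i$ is an induced subgraph of $U_m - R_j$, and Lemma~\ref{lemasubg} gives $\al(U_m - \cup_{i\in S}R_i) \le \floor{m^2/4}$.''
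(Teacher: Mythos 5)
Your proof is correct and follows essentially the same route as the paper: the paper likewise observes $U_m-R_x\simeq C(C_m-x)\simeq C(P_{m-1})\simeq P_m^{(2)}$, so $\al(U_m-R_x)=\floor{m^2/4}$, and then bounds $\al(U_m-\cup_{i\in S}R_i)$ by picking $x\in S$ and applying the induced-subgraph Lemma~\ref{lemasubg}. Your remark about $S=\emptyset$ is also well taken: the paper's proof implicitly requires $S$ nonempty (it chooses $x\in S$), and for odd $m$ the bound indeed fails when $S=\emptyset$ since $\al(C(C_{2k+1}))=k^2+k+\floor{(k+1)/2}>\floor{m^2/4}$, but only nonempty $S$ is used in the proof of Theorem~\ref{theorem06}.
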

\begin{proof}
We known that $U_{m}-x\simeq C(P_{m-1})\simeq P_m^{(2)}$, for any $x\in S$. Therefore
\[
\alpha(U_{m}-x)=\alpha(P_m^{(2)})=\floor{m^2/4}
\]
and the result follows because  $U_{m}-\cup_{i \in S}R_i$ is an induced subgraph of $U_{m+1}-R_x$, for any $x \in S$.
\end{proof}
\begin{customthm}{\ref{theorem06}}

Let $m \geq 3$. Then
\[
\alpha(C(W_{m, 1}))=\alpha(C(C_m))+1.
\]
\end{customthm}

\begin{proof} Let $I$ be any maximal independent set of $U_m$. As $I \cup \{m+1, m+1\}$ is and independent set in  $C(W_{m,1})$ then $\alpha(C(W_{m, 1}))\geq \alpha(C(C_m))+1$. Now we prove that $\alpha(C(W_{m, 1}))\leq \alpha(C(C_m))+1$. Let $I$ be an independent set in $C(W_{m, 1})$. We have several cases: If $I \subset U_m$ then $|I| \leq \alpha(C(C_m))$. If $I \subset B$ then $|I|\leq \floor{m/2} \leq \alpha(C(C_m))$. The last case is when $I=I'\cup I''$, where $I'\subset U_m$, $I''\subset B$, and with $I', I''$ both non- empty sets. As $B$ is isomorphic to $W_{m,1}$ then $|I''| \leq \floor{m/2}$. Let 
 \[
 S=\{i \in \{1, \dots, m\}\colon \{i, m+1\} \in I''\}.
 \]
 
First consider the case when $|S|=1$. By Proposition~\ref{procompleta2} we have that 
\[
|I|\leq \floor{m^2/4}+1.
\]
If $m=2k$, then 
\[
|I|\leq \floor{m^2/4}+1\leq  \floor{k^2}+1\leq k(k+1)+1=\alpha(C(C_{2k})+1.
\]
If $m=2k+1$, then
\[
\floor{m^2/4}+1\leq \floor{k^2+k+1/4}+1 \leq k(k+1)+\floor{(k+1)/2}+1=\alpha(C(C_{2k+1})+1.
\]
When $|S|\geq 2$ it can be shown, in a similar way that for the case of the double vertex graph of the wheel graph, that
\[
\alpha(U_m -\cup_{i \in S} R_i)\leq \floor{(m-1)^2/4}.
\]
 Therefore\begin{eqnarray*}
|I|&\leq& \floor{(m-1)^2/4}+\floor{m/2} 
\end{eqnarray*}
If $m=2k$, then
\begin{eqnarray*}
|I|&\leq& \floor{(2k-1)^2/4}+\floor{2k/2} \leq k(k+1)+1.
\end{eqnarray*}
If $m=2k+1$, then
\begin{eqnarray*}
|I|&\leq& \floor{(2k)^2/4}+\floor{(2k+1)/2} \leq k^2+k+\left\lfloor \frac{k+1}{2}\right\rfloor+1,
\end{eqnarray*}
and the proof is completed. 
\end{proof}

{\sc Unidad Acad\'emica de Matem\'aticas,}\\
{\sc Universidad Aut\'onoma de Zacatecas,}\\
 {\sc Zac., Mexico.}\\
 {\it E-mail address: paloma\_101293@hotmail.com}\\
 {\it E-mail address: luismanuel.rivera@gmail.com}\\
\end{document}